\newtheorem{thm}{Theorem}[section]
\newtheorem*{thm*}{Theorem}
\newtheorem{lem}[thm]{Lemma}
\newtheorem{cor}[thm]{Corollary}
\newtheorem{prop}[thm]{Proposition}
\theoremstyle{definition}
\newtheorem{defn}[thm]{Definition}
\newtheorem*{notn*}{Notation}
\newtheorem*{hyp*}{Hypothesis}
\newtheorem{rem}[thm]{Remark}
\newtheorem*{rem*}{Remark}
\numberwithin{equation}{section}
\newcommand{\secref}[1]{Section~\textup{\ref{#1}}}
\newcommand{\corref}[1]{Corollary~\textup{\ref{#1}}}
\newcommand{\lemref}[1]{Lemma~\textup{\ref{#1}}}
\newcommand{\propref}[1]{Proposition~\textup{\ref{#1}}}
\newcommand{\defnref}[1]{Definition~\textup{\ref{#1}}}
\newcommand{\midtext}[1]{\quad\text{#1}\quad}
\renewcommand{\and}{\midtext{and}}
\newcommand{\Z}{\mathbb Z}
\newcommand{\T}{\mathbb T}
\newcommand{\KK}{\mathcal K}
\newcommand{\LL}{\mathcal L}
\newcommand{\OO}{\mathcal O}
\newcommand{\Chi}{\raisebox{2pt}{\ensuremath{\chi}}}
\renewcommand{\epsilon}{\varepsilon}
\DeclareMathOperator{\aut}{Aut}
\DeclareMathOperator{\ad}{Ad}
\DeclareMathOperator{\ind}{Ind}
\DeclareMathOperator*{\spn}{span}
\DeclareMathOperator*{\clspn}{\overline{\spn}}
\newcommand{\id}{\text{\textup{id}}}
\newcommand{\rg}{\text{rg}}
\newcommand{\<}{\langle}
\renewcommand{\>}{\rangle}
\newcommand{\inv}{^{-1}}
\newcommand{\iso}{\overset{\cong}{\longrightarrow}}
\renewcommand{\bar}{\overline}
\newcommand{\wilde}{\widetilde}
\newcommand{\cs}%
{\ensuremath{\mathbf{C^*}}}
\newcommand{\csnd}%
{\ensuremath{\cs\!\!_\mathbf{nd}}}
\newcommand{\coact}%
{\ensuremath{\mathbf{C^*coact}}}
\newcommand{\coactnd}%
{\ensuremath{\coact_\mathbf{nd}}}
\newcommand{\coactn}%
{\ensuremath{\coact^\mathbf{n}}}
\newcommand{\coactnnd}%
{\ensuremath{\coactn_\mathbf{nd}}}
\newcommand{\coactm}%
{\ensuremath{\coact^\mathbf{m}}}
\newcommand{\coactmnd}%
{\ensuremath{\coactm_\mathbf{nd}}}
\newcommand{\cpct}[1]{#1^{(1)}}
\newcommand{\ideal}[2]{M(#1;#2)}
\newcommand{\cpcat}{\ensuremath{\mathsf{CPCorres}}}
\renewcommand{\csnd}{\ensuremath{\mathsf{C^*_{nd}}}}
\newcommand{\inn}[2]{\langle #1,#2 \rangle}
\begin{document}
\title{Functoriality of Cuntz-Pimsner correspondence maps}
\author[Kaliszewski, Quigg, and Robertson]
{S.~Kaliszewski, John Quigg, and David Robertson}
\address [S.~Kaliszewski] {School of Mathematical and Statistical Sciences, Arizona State University, Tempe, Arizona 85287} \email{kaliszewski@asu.edu}\
\address[John Quigg]{School of Mathematical and Statistical Sciences, Arizona State University, Tempe, Arizona 85287} \email{quigg@asu.edu}
\address[David Robertson]{School of Mathematics and Applied Statistics, University of Wollongong, NSW 2522, AUSTRALIA} \email{droberts@uow.edu.au}

\date{October 8, 2012}

\begin{abstract}
We show that the passage from a $C^*$-correspondence to its Cuntz-Pimsner $C^*$-algebra gives a functor on a category of $C^*$-correspondences with appropriately defined morphisms.  Applications involving topological graph $C^*$-algebras are discussed, 
and an application to crossed-product correspondences is presented in detail.
\end{abstract}

\subjclass[2010]{Primary 46L08; Secondary 46L55}
\keywords {Hilbert module, $C^*$-correspondence, Cuntz-Pimsner algebra, action, coaction}

\maketitle

\section{Introduction}

Cuntz-Pimsner algebras were first introduced by Pimsner in \cite{Pi} as $C^*$-algebras associated to $C^*$-correspondences with injective left actions; Katsura extended the definition in \cite{KatsuraCorrespondence} to include $C^*$-correspondences with non-injective left actions. The class of Cuntz-Pimsner algebras is very rich, containing all Cuntz-Krieger algebras, crossed products by $\Z$, and topological graph algebras.
Accordingly, there is a pressing need to understand how constructions at the level of $C^*$-correspondences carry over to the Cuntz-Pimsner algebras.

Now, many $C^*$-correspondence constructions naturally and necessarily involve
\emph{multiplier correspondences} $(M(Y),M(B))$.  
For example, if a group $G$ acts 
on a correspondence $(X,A)$, then $(X,A)$ embeds in the multipliers
$(M(X\rtimes G),M(A\rtimes G))$
of the crossed-product correspondence,
but not in general in $(X\rtimes G,A\rtimes G)$ itself.
Our main result (Corollary~\ref{functor}) is that a $C^*$-correspondence homomorphism
$(X,A) \to (M(Y),M(B))$ that
is \emph{covariant} in an appropriate sense (Definition~\ref{CP correspondence homomorphism})
induces a $C^*$-algebra homomorphism $\OO_X \to M(\OO_Y)$ of the corresponding Cuntz-Pimsner algebras.

Given that the natural embedding of a $C^*$-correspondence in its Cuntz-Pimsner algebra is often degenerate, there is no reason to expect that a correspondence
homomorphism $(X,A)\to (M(Y),M(B))$ will automatically extend to $(M(X),M(A))$; this makes composing two homomorphisms problematic. 
To deal with this, our notion of covariance incorporates the assumption that the image of $X$ lies in the so-called \emph{restricted multipliers} $M_B(Y)$ that were introduced in~\cite[Appendix~A]{dkq}. We then can show (Theorem~\ref{is a category}) that the $C^*$-homomorphisms $\OO_X \to M(\OO_Y)$ are obtained in a functorial way on an appropriately-defined category of $C^*$-correspondences.
Some earlier work has been done on functoriality of Cuntz-Pimsner algebras \cite{RobertsonExtensions, RobSzyCorrespondence}, but our approach is the first to incorporate multipliers.

In \secref{applications} we give three applications of our techniques: to topological graph actions, topological graph coactions, and to crossed-product $C^*$-correspondences. The first two are really just brief indications of applications, the first to a recent result \cite{dkq} concerning free and proper actions of groups on topological graphs, where the argument was essentially a ``bare-hands'' special case of our \corref{functor}, and the second is a foreshadowing of how we plan to use the technique to define coactions on Cuntz-Pimsner algebras from suitable coactions on the correspondences.
For the third application, we 
bring our methods to bear on crossed-product $C^*$-correspondences $(X \rtimes_\gamma G, A \rtimes_\alpha G)$. We show that the canonical embedding $(X,A) \to (M(X \rtimes_\gamma G),M(A \rtimes_\alpha G))$ induces via functoriality a covariant homomorphism $(\OO_X,G) \to M(\OO_{X \rtimes_\gamma G})$. As a result we find a surjective homomorphism $\OO_X \rtimes_\beta G \to \OO_{X \rtimes_\gamma G}$, which serves as an alternative approach to the result of Hao and Ng \cite[Theorem 2.10]{HN} that when the group $G$ is amenable there is an isomomorphism $\OO_{X \rtimes_\gamma G} \cong \OO_X \rtimes_\beta G$. By showing there is a maximal dual coaction on $\OO_{X \rtimes_\gamma G}$, we can show that our surjection is actually an isomorphism when the group $G$ is amenable, though this requires background on $C^*$-correspondence coactions and is saved for a forthcoming paper \cite{KQRCorrespondenceCoaction}.

\section{Preliminaries}

Given $C^*$-algebras $A$ and $B$, an \emph{$A-B$ correspondence} 
(or a \emph{$C^*$-correspondence over $A$ and $B$}) 
is a Hilbert $B$-module $X$ equipped with a left $A$-module action 
which is implemented by a homomorphism of~$A$ into 
the $C^*$-algebra $\LL(X)$ of adjointable operators on~$X$.
(We refer to \cite{enchilada, Ka1, tfb} for background on Hilbert modules
and further details on $C^*$-correspondences.)
The homomorphism is generically called $\varphi_A$, 
but we usually suppress this notation and just write 
$a \cdot \xi$ for $\varphi_A(a)(\xi)$, where $a \in A$ and $\xi \in X$. We say $X$ is \emph{full} if $\spn\langle X,X \rangle \subset B$ is dense. If in addition, the left action is an isomorphism $\varphi_A : A \to \KK(X)$ we will call $X$ an $A-B$ \emph{imprimitivity bimodule}.
We write $(A,X,B)$ to denote an $A-B$ correspondence $X$;
when $A=B$ we just write $(X,A)$, and call $X$ an \emph{$A$-correspondence} 
(or a \emph{$C^*$-correspondence over~$A$}).
\emph{In this paper we will make the standing assumption that all $C^*$-correspondences are \emph{nondegenerate} in the sense that $A\cdot X = X$.}

Given a $C^*$-correspondence $(A,X,B)$, 
the set $\KK(X) = \clspn \{ \theta_{\xi,\eta} \mid \xi,\eta\in X\}$
is a (closed) two-sided ideal in $\LL(X)$
called the \emph{compact operators} on $X$,
where by definition $\theta_{\xi,\eta}(\zeta) = \xi\cdot\< \eta , \zeta \>$ 
for $\zeta\in X$. 
The Banach space $\LL(B,X)$
of adjointable operators from $B$ to $X$ (where $B$ is viewed as a Hilbert $B$-module
in the natural way) becomes an $M(A)-M(B)$ correspondence when
equipped with the natural left action of $M(A)$, 
right action of $M(B)$,
and $M(B)$-valued inner product $\< m , n \> = m^*n$
all given by composition of operators.  
This is called the \emph{multiplier correspondence} of $X$,
and is denoted by $M(X)$. There is an embedding 
 $\xi \mapsto T_\xi:X \to M(X)$ given by $T_\xi(b) = \xi\cdot b$ for $b\in B$.
The \emph{strict topology} on $M(X)$ is generated by the seminorms
\[
m\mapsto \|T\cdot m\|\midtext{and}m\mapsto \|m\cdot b\|\midtext{for}T\in \KK(X), b\in B.
\]

A \emph{correspondence homomorphism} between two $C^*$-correspondences $(A,X,B)$ and $(C,Y,D)$
is a triple $(\pi,\psi,\rho)$ consisting of $C^*$-homomorphisms $\pi\colon A \to M(C)$ and $\rho\colon B \to M(D)$
and a linear map $\psi\colon X \to M(Y)$ satisfying
\begin{enumerate}
\item $\psi( a \cdot \xi) = \pi(a) \cdot \psi(\xi)$,
\item $\psi(\xi \cdot b) = \psi(\xi) \cdot \rho(b)$, and
\item $\rho( \< \xi , \eta \>) = \< \psi(\xi) , \psi(\eta) \>$.
\end{enumerate}
(Note that condition~(ii) follows from~(iii).)

A correspondence homomorphism
$(\pi,\psi,\rho)$ is \emph{nondegenerate} when 
$\pi$ and $\rho$ are nondegenerate $C^*$-homomorphisms
and $\psi(X) \cdot B = Y$.
In this case, by \cite[Theorem 1.30]{enchilada} there is a unique strictly continuous extension
$\bar\psi\colon M(X) \to M(Y)$ such that
\[
(\bar\pi,\bar\psi,\bar\rho)\colon (M(A),M(X),M(B))\to (M(C),M(Y),M(D))
\]
is a correspondence homomorphism, where $\bar\pi$ and $\bar\rho$
are the usual extensions of nondegenerate $C^*$-homomorphisms to multiplier algebras.

If $(X,A)$ is an $A$-correspondence and $B$ is a $C^*$-algebra,
a correspondence homomorphism of the form $(\pi,\psi,\pi)\colon (A,X,A) \to (B,B,B)$
(where $B$ is viewed as a $B$-correspondence in the natural way)
is called a \emph{Toeplitz representation} of $(X,A)$ in $B$, and is denoted $(\psi, \pi)$.
It is a critical observation, usually attributed to Pimsner \cite[Lemma~3.2]{Pi} (see also \cite[Lemma~2.2]{KajPinWatIdeal}), that a Toeplitz representation of a $C^*$-correspondence determines a homomorphism of the algebra of compact operators. 
Here we derive this fact (Corollary~\ref{toeplitz} below) from the more fundamental 
Proposition~\ref{compacts}
to emphasize that it is really a property of the underlying Hilbert module structure
on~$X$ and~$B$.

\begin{prop}\label{compacts}
Let $X$ and $Y$ be Hilbert modules over $C^*$-algebras $A$ and $B$, respectively,
and suppose $(\psi,\rho)\colon (X,A) \to (M(Y),M(B))$ is a correspondence homomorphism. Then there is a unique homomorphism $\cpct\psi\colon\KK(X)\to M(\KK(Y))$ such that
\[
\cpct \psi(\theta_{\xi,\eta})={\psi(\xi)\psi(\eta)^*}\midtext{for}\xi,\eta\in X.
\]
If $\psi(X)\subseteq Y$, then  $\cpct\psi(\KK(X))\subseteq \KK(Y)$,
with $\cpct \psi(\theta_{\xi,\eta})=\theta_{\psi(\xi),\psi(\eta)}$.
\end{prop}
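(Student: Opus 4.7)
My plan is to build $\cpct\psi$ first on the algebraic span $\KK_0(X) := \spn\{\theta_{\xi,\eta} : \xi,\eta \in X\}$ by the obvious formula, then extend by continuity. Concretely, define
\[
\cpct\psi_0\Big(\sum_k c_k \theta_{\xi_k,\eta_k}\Big) := \sum_k c_k\,\psi(\xi_k)\psi(\eta_k)^*.
\]
Each summand belongs to $\LL(B,Y)\cdot\LL(Y,B) \subseteq \LL(Y) = M(\KK(Y))$, so the right-hand side makes sense in $M(\KK(Y))$. Uniqueness of the extension $\cpct\psi$ is automatic from density of $\KK_0(X)$ in $\KK(X)$.

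The main obstacle I expect is well-definedness: if $T := \sum c_k \theta_{\xi_k,\eta_k}$ vanishes in $\KK(X)$, I must show $S := \sum c_k \psi(\xi_k)\psi(\eta_k)^*$ vanishes in $\LL(Y)$. Using $\psi(\eta_k)^*\psi(\zeta) = \<\psi(\eta_k),\psi(\zeta)\> = \rho(\<\eta_k,\zeta\>)$, for every $\zeta \in X$ I compute
\[
S\cdot\psi(\zeta) \;=\; \sum_k c_k\,\psi(\xi_k)\,\rho(\<\eta_k,\zeta\>) \;=\; \sum_k c_k\,\psi(\xi_k\cdot\<\eta_k,\zeta\>) \;=\; \psi(T\zeta).
\]
Setting $T=0$ gives $S(\psi(\zeta)\cdot b)=0$ for all $\zeta \in X$ and $b \in B$, so $S$ annihilates the closed submodule $\wilde Y := \overline{\psi(X)\cdot B}\subseteq Y$. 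Applying the same computation to $T^*=0$ shows $S^*$ annihilates $\wilde Y$, hence $S$ maps $Y$ into $\wilde Y^\perp$. On the other hand, each value $S\omega = \sum_k c_k\,\psi(\xi_k)\cdot\<\psi(\eta_k),\omega\>$ is a finite sum of elements of $\psi(X)\cdot B \subseteq \wilde Y$, so $S$ also maps $Y$ into $\wilde Y$. Combining, $S\omega \in \wilde Y \cap \wilde Y^\perp = \{0\}$ for all $\omega$, so $S=0$.

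With well-definedness secured, the $*$-homomorphism properties on $\KK_0(X)$ follow routinely from $(\psi(\xi)\psi(\eta)^*)^* = \psi(\eta)\psi(\xi)^*$, the product rule $\theta_{\xi_1,\eta_1}\theta_{\xi_2,\eta_2} = \theta_{\xi_1\cdot\<\eta_1,\xi_2\>,\eta_2}$, and again the identity $\psi(\eta_1)^*\psi(\xi_2) = \rho(\<\eta_1,\xi_2\>)$. Since any $*$-homomorphism from a $*$-subalgebra of a $C^*$-algebra (with the inherited $C^*$-norm) into a $C^*$-algebra is automatically contractive by the usual spectral-radius argument, $\cpct\psi_0$ extends uniquely by continuity to the desired $\cpct\psi\colon \KK(X)\to M(\KK(Y))$. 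Finally, if $\psi(X)\subseteq Y$ then for each $\omega \in Y$ the element $\psi(\xi)\psi(\eta)^*\omega = \psi(\xi)\cdot\<\psi(\eta),\omega\>$ is literally $\theta_{\psi(\xi),\psi(\eta)}(\omega)$, so each $\cpct\psi_0(\theta_{\xi,\eta}) \in \KK(Y)$; by density, $\cpct\psi(\KK(X)) \subseteq \KK(Y)$.
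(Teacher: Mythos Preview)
Your well-definedness argument and the verification of the $*$-algebra axioms are correct. The gap is the contractivity step: the assertion that ``any $*$-homomorphism from a $*$-subalgebra of a $C^*$-algebra (with the inherited $C^*$-norm) into a $C^*$-algebra is automatically contractive by the usual spectral-radius argument'' is false. For a counterexample, let $D$ be the polynomials inside $C[0,1]$ and define $\phi\colon D\to\C$ by $\phi(p)=p(2)$; this is a $*$-homomorphism, yet $\|x^n\|=1$ while $|\phi(x^n)|=2^n$. The spectral-radius argument needs the domain to be complete so that invertibility, and hence spectra, can be controlled; $\KK_0(X)$ is not complete.

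The gap is repairable using the identity $S\psi(\zeta)=\psi(T\zeta)$ you already proved. Writing $S\omega=\sum_k\psi(\xi_k)\beta_k$ with $\beta_k=c_k\<\psi(\eta_k),\omega\>\in B$, one has $S^*S\omega=\sum_k\psi(T^*\xi_k)\beta_k$, and the Gram-matrix inequality $\bigl[\<\xi_j,TT^*\xi_l\>\bigr]\le\|T\|^2\bigl[\<\xi_j,\xi_l\>\bigr]$ in $M_n(A)$ (the difference being the Gram matrix of the vectors $(\|T\|^2-TT^*)^{1/2}\xi_j$) gives $\<S^*S\omega,S^*S\omega\>\le\|T\|^2\<S\omega,S\omega\>$, hence $\|S^*S\omega\|\le\|T\|\,\|S\omega\|$; then $\|S\omega\|^2=\|\<\omega,S^*S\omega\>\|\le\|\omega\|\,\|S^*S\omega\|$ yields $\|S\omega\|\le\|T\|\,\|\omega\|$. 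The paper's proof avoids any such estimate: after reducing to the case $\psi(X)\subseteq Y$ with everything full and surjective, it regards $X$ as a $\KK(X)$--$A$ imprimitivity bimodule and invokes the Rieffel correspondence to produce a $C^*$-isomorphism $\KK(X)/J\cong\KK(Y)$ for the ideal $J$ induced from $\ker\rho$; the map $\psi^{(1)}$ is then just the quotient map composed with this isomorphism, so contractivity is automatic.
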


\begin{proof}
Obviously there can be at most one such $\cpct\psi$.
For existence, first suppose $\psi(X)\subseteq Y$.  
Without loss of generality, we may assume that $X$ is full
(otherwise, replace $A$ by the closed span of $\<X,X\>$);
similarly, we may assume that $Y$ is full,
and that $\psi(X)=Y$ and $\rho(A)=B$.

Next, let $C=\KK(X)$, so that $X$ is a $C-A$ imprimitivity bimodule. 
Let $I=\ker\rho$. The Rieffel correspondence (see \cite[Definition 1.7]{enchilada}) induces an ideal $J = X\text{-}\ind I := \{c \in C: c X \subset XI\}$ so that $XI$ and $X/XI$ are $J-I$ and $C/J-A/I$ imprimitivity bimodules respectively.
Then the quotient maps $(q_C, q_X, q_A)$ comprise a surjective imprimitivity bimodule homomorphism
of $(C,X,A)$ onto $(C/J, X/XI, A/I)$, and there is an imprimitivity bimodule isomorphism
$(\pi, \wilde\psi, \wilde\rho)$ of $(C/J, X/XI, A/I)$ onto $(\KK(Y),Y,B)$ such that
$\wilde\psi(\xi + J) = \psi(\xi)$ for $\xi\in X$.  
Taking $\cpct\psi = \pi\circ q_C$, for $\xi,\eta\in X$ we have
\begin{align*}
\cpct\psi(\theta_{\xi,\eta}) 
&= \pi(q_C({}_C\<\xi,\eta\>)) = \pi ( {}_C\<\xi,\eta\> + J)
= \pi( {}_{C/J}\<\xi+J,\eta+J\>)\\
&= {}_{\KK(Y)}\<\wilde\psi(\xi+J),\wilde\psi(\eta+J)\>
= {}_{\KK(Y)}\<\psi(\xi),\psi(\eta)\> = \theta_{\psi(\xi),\psi(\eta)}.
\end{align*}

For the general case, apply the above to the Hilbert $M(B)$-module $M(Y)$,
and note that by \cite[Remark~A.10]{dkq} we have
$\KK(M(Y))\subset M(\KK(Y))$,
with ${}_{\KK(M(Y))}\<m,n\>=mn^*$ for $m,n\in M(Y)$.
\end{proof}

\begin{cor}
[\cite{Pi, KajPinWatIdeal}]
\label{toeplitz}
Let $(X,A)$ be a $C^*$-correspondence,
and let $(\psi,\pi)$ be a Toeplitz representation of $(X,A)$ in a $C^*$-algebra $B$. 
Then there is a unique homomorphism $\cpct\psi\colon\KK(X)\to B$ such that
\[
\cpct\psi(\theta_{\xi,\eta})=\psi(\xi)\psi(\eta)^*.
\]
\end{cor}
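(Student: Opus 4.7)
The plan is to deduce the corollary as a direct specialization of \propref{compacts}. A Toeplitz representation $(\psi,\pi)$ of $(X,A)$ in $B$ is by definition a correspondence homomorphism $(\pi,\psi,\pi)\colon (A,X,A)\to (B,B,B)$, where $B$ is regarded as the standard $B$-correspondence over itself. In particular, taking $Y=B$ in \propref{compacts}, we are in exactly the situation $\psi(X)\subseteq Y$ covered by the second sentence of that proposition.

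First I would invoke \propref{compacts} to obtain the unique homomorphism $\cpct\psi\colon \KK(X)\to M(\KK(B))$, and then use the containment $\psi(X)\subseteq B$ to conclude $\cpct\psi(\KK(X))\subseteq \KK(B)$ with $\cpct\psi(\theta_{\xi,\eta})=\theta_{\psi(\xi),\psi(\eta)}$. Next I would compose with the canonical isomorphism $\KK(B)\cong B$, namely the map sending $\theta_{b,c}\mapsto bc^*$ (recall that $\KK(B)=\clspn\{\theta_{b,c}\mid b,c\in B\}$ and that $\theta_{b,c}(d)=b\<c,d\>=bc^*d$ acts on $B$ by left multiplication by $bc^*$). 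Under this identification, $\theta_{\psi(\xi),\psi(\eta)}$ becomes $\psi(\xi)\psi(\eta)^*\in B$, yielding the desired formula for $\cpct\psi\colon\KK(X)\to B$.

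Uniqueness is immediate from $\KK(X)=\clspn\{\theta_{\xi,\eta}\mid \xi,\eta\in X\}$ together with continuity of $\cpct\psi$, so any homomorphism satisfying the stated formula on finite-rank operators is determined. I don't foresee a genuine obstacle here, since the work has already been absorbed into \propref{compacts}; the only thing to verify carefully is that the $\KK(B)\cong B$ identification is compatible with the rank-one formula, which is a routine computation with the inner product $\<b,c\>=b^*c$ on $B$ as a Hilbert $B$-module.
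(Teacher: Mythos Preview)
Your proposal is correct and is exactly the derivation the paper intends: the corollary is stated without proof precisely because it is the special case $Y=B$ of \propref{compacts}, using the second clause (since $\psi(X)\subseteq B$) together with the identification $\KK(B)\cong B$ via $\theta_{b,c}\mapsto bc^*$.
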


For a $C^*$-correspondence $(X,A)$, we follow Katsura's convention \cite{Ka1} and define an ideal $J_X$ of $A$ by
\[
 J_X = \{a \in A \mid 
 \text{$\varphi_A(a) \in \KK(X)$ and $ab = 0$ for all $b \in \ker(\varphi_A)$}\}.
\]
A Toeplitz representation $(\psi,\pi)$ of $(X,A)$ in $B$ is \emph{Cuntz-Pimsner covariant} if
\[
 \cpct \psi (\varphi_A(a)) = \pi(a)
 \quad\text{for $a\in J_X$.}
\]
We denote the universal Cuntz-Pimsner covariant Toeplitz representation by $(k_X,k_A)$, and the \emph{Cuntz-Pimsner algebra} by $\OO_X$.
Note that $k_A\colon A\to \OO_X$ will always be nondegenerate as a $C^*$-homomorphism 
because of our assumption that 
all correspondences $(X,A)$ are nondegenerate.
Again, we refer the reader to \cite{KatsuraCorrespondence} for details.

Finally,
we will need the theory of ``relative multipliers'' from \cite[Appendix~A]{dkq}, which is useful for extending degenerate homomorphisms.
If $(X,A)$ is a nondegenerate correspondence and $\kappa\colon C\to M(A)$ is a nondegenerate homomorphism, the
\emph{$C$-multipliers} of $X$ are by definition
\[
M_C(X)=\{m\in M(X) \mid \kappa(C)\cdot m\cup m\cdot \kappa(C)\subset X\}.
\]
The \emph{$C$-strict} topology on $M_C(X)$ is generated by the seminorms
\[
m\mapsto \|\kappa(c)\cdot m\|
\midtext{and}
m\mapsto \|m\cdot \kappa(c)\|
\quad\text{for $c\in C$.}
\]
When $A$ is viewed as an $A$-correspondence over itself in the usual way,
$M_C(A)$  is a $C^*$-subalgebra
of $M(A)$.

Relative multipliers possess the following elementary properties:
\begin{enumerate}
\item
The $C$-strict topology is stronger than the relative strict topology on $M_C(X)$.

\item
$M_C(X)$ is an $M_C(A)$-correspondence with respect to the restrictions of the operations of the $M(A)$-correspondence $M(X)$, and the operations are separately $C$-strictly continuous.

\item
If $X=A$, then $M_C(A)$ is a $C^*$-subalgebra of $M(A)$, and
the multiplication and involution on $M_C(A)$ are separately $C$-strictly continuous.

\item
$\KK(M_C(X))\subset M_C(\KK(X))$.

\item
$M_C(X)$ is the $C$-strict completion of $X$.

\item
$M_C(X)$ is an $M(C)$-sub-bimodule of $M(X)$.
\end{enumerate}

The main purpose of relative multipliers is the following extension theorem \cite[Proposition~A.11]{dkq}:
Suppose $(X,A)$ and $(Y,B)$ are (nondegenerate) $C^*$-correspondences and
$\kappa\colon C\to M(A)$ and $\sigma\colon D\to M(B)$
are nondegenerate homomorphisms.
For any correspondence homomorphism $(\psi,\pi)\colon (X,A)\to (M_D(Y),M_D(B))$,
if there is a nondegenerate homomorphism $\lambda\colon C\to M(\sigma(D))$
such that
\[
\pi(\kappa(c)a)=\lambda(c)\pi(a)\quad\text{for all $c\in C$ and $a\in A$,}
\]
then  there is a unique $C$-strict to $D$-strictly continuous correspondence homomorphism $(\bar\psi,\bar\pi)\colon (M_C(X),M_C(A))\to (M_D(Y),M_D(B))$ that extends $(\psi,\pi)$.

A closely related concept is the following, due to Baaj and Skandalis \cite{BaajSkandalis}.

\begin{defn}
For an ideal $I$ of a $C^*$-algebra $A$, let
\[
\ideal{A}{I}=\{m\in M(A)\mid mA\cup Am\subset I\}.
\]
\end{defn}

\begin{lem}\label{strict}
If $I$ is an ideal of a $C^*$-algebra $A$, then:
\begin{enumerate}
\item $\ideal{A}{I}$ is the strict closure of $I$ in $M(A)$;

\item if $\pi\colon A\to M(B)$ is a nondegenerate homomorphism such that $\pi(I)\subset B$, then
$\bar\pi(\ideal{A}{I})\subset M_A(B)$, and
$\bar\pi|\colon\ideal{A}{I}\to M_A(B)$
is strict to $A$-strictly continuous.
\end{enumerate}
\end{lem}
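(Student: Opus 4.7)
The plan is to prove (i) by the usual sandwich: show $I$ is contained in $M(A;I)$, that $M(A;I)$ is strictly closed in $M(A)$, and that $I$ is strictly dense in $M(A;I)$. Then (ii) will follow by direct computation, using the identity $\pi(a)\bar\pi(m)=\pi(am)$ for $a\in A$, $m\in M(A)$, together with the hypothesis $\pi(I)\subset B$.

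For part (i), observe first that $I\subset M(A;I)$ because $I$ is a two-sided ideal. To see that $M(A;I)$ is strictly closed, suppose $m_i\to m$ strictly in $M(A)$ with each $m_i\in M(A;I)$. For any $a\in A$ we have $m_ia\to ma$ in norm; since $m_ia\in I$ and $I$ is norm-closed, $ma\in I$, and similarly $am\in I$, so $m\in M(A;I)$. For strict density, let $m\in M(A;I)$ and let $(e_\lambda)$ be a bounded approximate identity for $A$. Then $me_\lambda\in mA\subset I$, and for each $a\in A$ we have
\[
(me_\lambda)a - ma = m(e_\lambda a - a)\to 0
\and
a(me_\lambda) - am = (am)e_\lambda - am \to 0,
\]
the second because $am\in I\subset A$ and $(e_\lambda)$ is also an approximate identity for $I$. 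Hence $me_\lambda\to m$ strictly in $M(A)$, establishing density and therefore (i).

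For part (ii), fix $m\in M(A;I)$. To show $\bar\pi(m)\in M_A(B)$ we must verify $\pi(A)\bar\pi(m)\cup \bar\pi(m)\pi(A)\subset B$. For $a\in A$, since $am\in I\subset A$ we have
\[
\pi(a)\bar\pi(m) = \bar\pi(am) = \pi(am)\in\pi(I)\subset B,
\]
and symmetrically $\bar\pi(m)\pi(a)=\pi(ma)\in B$, so $\bar\pi(m)\in M_A(B)$. Finally, if $m_i\to m$ strictly in $M(A)$ (and hence in the strictly closed subspace $M(A;I)$), then $am_i\to am$ in norm for each $a\in A$, so
\[
\pi(a)\bigl(\bar\pi(m_i)-\bar\pi(m)\bigr) = \pi(am_i - am) \to 0
\]
in norm, and similarly on the right. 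By the definition of the $A$-strict topology on $M_A(B)$ (generated by the seminorms $n\mapsto\|\pi(a)n\|$ and $n\mapsto\|n\pi(a)\|$), this says exactly that $\bar\pi(m_i)\to\bar\pi(m)$ $A$-strictly, completing (ii).

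I do not anticipate a genuine obstacle: the one spot that requires a moment's care is choosing the approximating net in (i), where one must use an approximate identity for $A$ (not merely for $I$) so that $me_\lambda a\to ma$ for arbitrary $a\in A$; the fact that the same net is automatically an approximate identity for $I$ then handles the left-sided estimate. Everything else is bookkeeping with the definition of $M(A;I)$ and of the $A$-strict topology.
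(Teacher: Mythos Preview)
Your proof is correct and essentially matches the paper's. Part~(i) is identical in substance (approximate identity for $A$ to get density, norm-closure of $I$ to get strict closedness). For part~(ii) the paper invokes (i) to reduce to checking that $\pi|_I\colon I\to B$ is strict-to-$A$-strict continuous and then implicitly passes to the strict closure, whereas you compute directly with $m\in M(A;I)$ via $\pi(a)\bar\pi(m)=\pi(am)\in\pi(I)\subset B$; your route is slightly more explicit but the underlying computation is the same.
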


\begin{proof}
This is elementary, and the techniques are similar to those of \cite[Appendix~A]{dkq}.
For (i), we first show that $I$ is strictly dense in $\ideal{A}{I}$. Let $m\in \ideal{A}{I}$, and let $\{e_i\}$ be an approximate identity for $A$. Then $me_i\in I$ for all $i$, and 
$me_i\to m$ strictly in $M(A)$.

To see that $\ideal{A}{I}$ is strictly closed in $M(A)$, let $\{m_i\}$ be a net in $\ideal{A}{I}$ converging strictly to $m$ in $M(A)$. We must show that $m\in \ideal{A}{I}$. For $a\in A$ we have
\[
\|m_ia-ma\|\to 0\midtext{and}\|am_i-am\|\to 0,
\]
so $ma,am\in I$ because $m_ia,am_i\in I$ for all $i$.

For (ii), note that by (i) it suffices to show that $\pi|\colon I\to B$ is continuous for the relative strict topology of $I$ in $M(A)$ and the relative $A$-strict topology of $B\subset M_A(B)$.
Let $\{c_i\}$ be a net in $I$ converging strictly to $0$ in $M(A)$,
and let $a\in A$.
Then $\pi(c_i)\pi(a)=\pi(c_ia)$ and $\pi(a)\pi(c_i)=\pi(ac_i)$ converge to $0$ in norm, so $\pi(c_i)\to 0$ $A$-strictly in~$B$.
\end{proof}

\begin{rem}
Let $I$ be an ideal of a $C^*$-algebra $A$, and let $\rho\colon A\to M(I)$ be the canonical homomorphism. Then by \lemref{strict} the canonical extension $\bar\rho\colon M(A)\to M(I)$ maps $\ideal{A}{I}$ into $M_A(I)$.
In fact, the restriction $\bar\rho|_{\ideal{A}{I}}$ is injective, although we will not need this here.
\end{rem}

\section{Functoriality} \label{sec:functoriality}

\begin{defn}\label{CP correspondence homomorphism}
Let $(X,A)$ and $(Y,B)$ be nondegenerate $C^*$-correspondences. 
A correspondence homomorphism $(\psi,\pi)\colon (X,A)\to (M(Y),M(B))$ is \emph{Cuntz-Pimsner covariant} if
\begin{enumerate}
\item $\psi(X)\subset M_B(Y)$,

\item $\pi\colon A\to M(B)$ is nondegenerate,

\item $\pi(J_X)\subset \ideal{B}{J_Y}$, and

\item the diagram
\begin{equation*}
\xymatrix{
J_X \ar[r]^-{\pi|} \ar[d]_{\varphi_A|}
&\ideal{B}{J_Y} \ar[d]^{\bar{\varphi_B}\bigm|}
\\
\KK(X) \ar[r]_-{\cpct\psi}
&M_B(\KK(Y))
}
\end{equation*}
commutes, where $\cpct\psi$ is the homomorphism provided by Proposition~\ref{compacts}. 
\end{enumerate}
\end{defn}

The above definition simplifies when the correspondence homomorphism is nondegenerate:

\begin{lem}\label{lem:cpdefn}
A nondegenerate correspondence homomorphism 
$(\psi,\pi)\colon (X,A)\to (M(Y),M(B))$ is {Cuntz-Pimsner covariant} if
and only if items~\textup(i\textup) and~\textup(iii\textup) hold in Definition~\ref{CP correspondence homomorphism}.
\end{lem}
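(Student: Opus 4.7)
The forward implication holds by definition of Cuntz-Pimsner covariance, so I focus on the converse: assume $(\psi,\pi)$ is a nondegenerate correspondence homomorphism satisfying \textup{(i)} and \textup{(iii)} of \defnref{CP correspondence homomorphism}. Condition \textup{(ii)} is already part of the nondegeneracy hypothesis, so what remains is to verify commutativity of the diagram in \textup{(iv)}.

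First I would check that both paths in the diagram land where claimed. By \textup{(i)}, $\psi(X)\subset M_B(Y)$, so \propref{compacts} gives $\cpct\psi(\theta_{\xi,\eta})=\psi(\xi)\psi(\eta)^*\in\KK(M_B(Y))\subset M_B(\KK(Y))$, the last inclusion being property~(iv) of relative multipliers; since $M_B(\KK(Y))$ is norm-closed in $M(\KK(Y))$, this yields $\cpct\psi(\KK(X))\subset M_B(\KK(Y))$. For the other path, $\varphi_B\colon B\to\LL(Y)=M(\KK(Y))$ is nondegenerate (by the standing hypothesis on left actions) and sends $J_Y$ into $\KK(Y)$ by definition of $J_Y$, so \lemref{strict}(ii) gives $\bar\varphi_B(\ideal{B}{J_Y})\subset M_B(\KK(Y))$; combined with \textup{(iii)}, the composite $\bar\varphi_B\circ\pi|$ carries $J_X$ into $M_B(\KK(Y))$.

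To prove commutativity, fix $a\in J_X$. Since $\varphi_A(a)\in\KK(X)$, approximate it in norm by finite sums $k_n=\sum_j\theta_{\xi_j^n,\eta_j^n}$. For any $\xi\in X$, \propref{compacts} together with the correspondence identities gives
\begin{align*}
\cpct\psi(\theta_{\xi_j^n,\eta_j^n})\cdot\psi(\xi)
&=\psi(\xi_j^n)\psi(\eta_j^n)^*\cdot\psi(\xi)
=\psi(\xi_j^n)\cdot\pi(\<\eta_j^n,\xi\>)\\
&=\psi(\xi_j^n\cdot\<\eta_j^n,\xi\>)=\psi(\theta_{\xi_j^n,\eta_j^n}(\xi));
\end{align*}
summing over $j$, applying $\cpct\psi$ to $k_n$, and letting $n\to\infty$ (using norm continuity of $\cpct\psi$ and of $\psi$) yields
\[
\cpct\psi(\varphi_A(a))\cdot\psi(\xi)=\psi(\varphi_A(a)\xi)=\psi(a\cdot\xi)=\pi(a)\cdot\psi(\xi)=\bar\varphi_B(\pi(a))\cdot\psi(\xi).
\]
Right-multiplying by $b\in B$ shows that $\cpct\psi(\varphi_A(a))$ and $\bar\varphi_B(\pi(a))$ agree on every element of $\psi(X)\cdot B$, which is dense in $Y$ by nondegeneracy. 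Since both elements lie in $M_B(\KK(Y))\subset M(\KK(Y))=\LL(Y)$ and $\LL(Y)$ acts faithfully on $Y$, they must be equal, establishing \textup{(iv)}.

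I do not anticipate any genuine obstacle; the computation is routine once the framework is set up. The one bookkeeping point to watch is that the strict extension $\bar\varphi_B$ of \lemref{strict}(ii) agrees on $\ideal{B}{J_Y}$ with the usual extension of $\varphi_B$ to $M(B)\to\LL(Y)$, which is immediate by uniqueness of strict extensions of nondegenerate homomorphisms.
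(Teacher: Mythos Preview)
Your proof is correct, and the underlying computation is the same one the paper uses: both arguments verify that the two operators in question agree on elements of the form $\psi(\xi)\cdot b$, and then invoke nondegeneracy $\psi(X)\cdot B=Y$ to conclude equality in $\LL(Y)$.

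The only difference is packaging. The paper isolates the key step as a separate, slightly more general lemma (\lemref{lem:nondegcomm}): for \emph{any} nondegenerate correspondence homomorphism and \emph{any} $a\in A$ (not just $a\in J_X$), one has $\bar{\psi^{(1)}}(\varphi_A(a))=\bar{\varphi_B}(\pi(a))$ in $\LL(Y)$, where $\bar{\psi^{(1)}}$ is the strict extension of $\psi^{(1)}$ to $\LL(X)$. Because the paper works with this extension, it gets the intertwining identity $\bar{\psi^{(1)}}(\varphi_A(a))\psi(\xi)=\psi(\varphi_A(a)\xi)$ for free from strict continuity, and so does not need your approximation of $\varphi_A(a)$ by finite-rank operators. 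Your version, by contrast, stays inside $\KK(X)$ and avoids having to check that $\psi^{(1)}$ is nondegenerate in order to extend it. Either organization is fine; the paper's has the mild advantage of yielding a reusable statement valid on all of $A$.
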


The lemma follows immediately from the following elementary result.
Lemma~\ref{lem:nondegcomm} is presumably well-known, but we could not find a reference
in the literature.

\begin{lem}\label{lem:nondegcomm}
For any nondegenerate correspondence homomorphism
$(\pi,\psi,\rho)\colon (A,X,B)\to (M(C),M(Y),M(D))$,
the diagram
\begin{equation*}
\xymatrix{
A \ar[rr]^-\pi \ar[d]_{\varphi_A}
&&M(C) \ar[d]^{\bar{\varphi_C}}
\\
\LL(X) \ar[rr]_-{\bar{\psi^{(1)}}}
&&\LL(Y)
}
\end{equation*}
commutes.
\end{lem}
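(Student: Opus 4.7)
The plan is to evaluate both operators $\bar{\psi^{(1)}}(\varphi_A(a))$ and $\bar{\varphi_C}(\pi(a))$ in $\LL(Y)$ on the dense subspace $\psi(X)\cdot D \subset Y$ guaranteed by nondegeneracy of $(\pi,\psi,\rho)$. The key auxiliary identity I would establish is
\[
\bar{\psi^{(1)}}(T)\,\psi(\xi)=\psi(T\xi)\quad\text{in } M(Y),\qquad T\in\LL(X),\ \xi\in X.
\]
Once it is in hand, taking $T=\varphi_A(a)$ and using correspondence-homomorphism axiom~(i) gives $\bar{\psi^{(1)}}(\varphi_A(a))\psi(\xi) = \psi(a\cdot\xi) = \pi(a)\cdot\psi(\xi) = \bar{\varphi_C}(\pi(a))\psi(\xi)$. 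Multiplying on the right by $d\in D$ and invoking density of $\psi(X)\cdot D$ in $Y$, together with boundedness of both sides in $\LL(Y)$, forces $\bar{\psi^{(1)}}(\varphi_A(a))=\bar{\varphi_C}(\pi(a))$.

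Before running that argument I would first verify that $\bar{\psi^{(1)}}$ exists. The homomorphism $\psi^{(1)}\colon\KK(X)\to M(\KK(Y))=\LL(Y)$ from Proposition~\ref{compacts} is nondegenerate: a direct calculation using axioms~(ii) and~(iii) gives $\psi^{(1)}(\theta_{\xi,\eta})(\psi(\zeta)\cdot d)=\psi(\xi\cdot\langle\eta,\zeta\rangle)\cdot d$, and taking $\xi=\zeta$ and running through an approximate identity $(k_i)$ of $\KK(X)$ shows $\psi^{(1)}(k_i)\psi(\zeta)d=\psi(k_i\zeta)d\to\psi(\zeta)d$. Hence every element of $\psi(X)\cdot D$ lies in the norm closure of $\psi^{(1)}(\KK(X))\cdot Y$, and nondegeneracy of $(\pi,\psi,\rho)$ makes this dense in $Y$. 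Consequently $\psi^{(1)}$ admits a unique strictly continuous extension $\bar{\psi^{(1)}}\colon\LL(X)=M(\KK(X))\to\LL(Y)$ characterized by $\bar{\psi^{(1)}}(T)\,\psi^{(1)}(k)=\psi^{(1)}(Tk)$ for $T\in\LL(X)$ and $k\in\KK(X)$.

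It then remains to prove the auxiliary identity. For a rank-one $T=\theta_{\zeta,\mu}$ it is a one-line consequence of $\psi^{(1)}(\theta_{\zeta,\mu})=\psi(\zeta)\psi(\mu)^*$ and $\psi(\mu)^*\psi(\xi)=\rho(\langle\mu,\xi\rangle)$, and it extends by linearity and norm-continuity to $\psi^{(1)}(k)\psi(\xi)=\psi(k\xi)$ for all $k\in\KK(X)$. For general $T\in\LL(X)$, pick an approximate identity $(e_i)$ of $\KK(X)$. Then $Te_i\in\KK(X)$, and applying the $\KK(X)$-case to $Te_i$ gives
\[
\bar{\psi^{(1)}}(T)\,\psi^{(1)}(e_i)\,\psi(\xi) = \psi^{(1)}(Te_i)\,\psi(\xi) = \psi(Te_i\xi).
\]
Since $e_i\xi\to\xi$ in norm, $\psi^{(1)}(e_i)\psi(\xi)=\psi(e_i\xi)\to\psi(\xi)$ and $\psi(Te_i\xi)\to\psi(T\xi)$; boundedness of $\bar{\psi^{(1)}}(T)$ then lets one pass to the norm limit and conclude. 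The main obstacle is bookkeeping rather than substance: one has to keep straight which nondegeneracy underwrites which strictly continuous extension ($\bar{\varphi_C}$, $\bar{\psi^{(1)}}$, and the implicit $M(C)$-action on $M(Y)$), and in particular verify the nondegeneracy of $\psi^{(1)}$ so that $\bar{\psi^{(1)}}$ exists at all.
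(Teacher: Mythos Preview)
Your proposal is correct and follows essentially the same approach as the paper: both arguments reduce to checking $\bar{\psi^{(1)}}(\varphi_A(a))\eta=\bar{\varphi_C}(\pi(a))\eta$ on the dense subspace $\psi(X)\cdot D\subset Y$ via the identity $\bar{\psi^{(1)}}(T)\psi(\xi)=\psi(T\xi)$ and axiom~(i). The paper's proof simply asserts this identity (for $T=\varphi_A(a)$) in one line, whereas you supply the justification---verifying nondegeneracy of $\psi^{(1)}$ so that $\bar{\psi^{(1)}}$ exists, and then bootstrapping the identity from rank-one operators through $\KK(X)$ to $\LL(X)$ via an approximate identity---which is exactly the bookkeeping the paper suppresses.
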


\begin{proof}
Fix $a\in A$; we must show that $\bar{\psi^{(1)}}(\varphi_A(a))\eta
= \bar{\varphi_C}(\pi(a))\eta$ for all $\eta\in Y$.
By nondegeneracy it suffices to consider
elements of the form $\eta=\psi(\xi)d$ with $\xi\in X$ and $d\in D$,
in that case we have
\begin{align*}
\bar{\psi^{(1)}}(\varphi_A(a))\eta
&= \bar{\psi^{(1)}}(\varphi_A(a))\psi(\xi)d
= \psi(\varphi_A(a)\xi)d
= \psi(a\cdot\xi)d\\
&= \pi(a)\cdot\psi(\xi)d
= \bar{\varphi_C}(\pi(a))\psi(\xi)d
= \bar{\varphi_C}(\pi(a))\eta.\qedhere
\end{align*}
\end{proof}

The following lemma addresses the overlap between 
Cuntz-Pimsner covariant correspondence homomorphisms
and Cuntz-Pimsner covariant Toeplitz representations:

\begin{lem}\label{consistent}
Let $(X,A)$ be a nondegenerate $C^*$-correspondence,
and let $(\psi,\pi)\colon\(X,A)\to M(B)$ be a Toeplitz representation of $(X,A)$ 
in a $C^*$-algebra~$B$.
Then $(\psi,\pi)$ is Cuntz-Pimsner covariant 
as a correspondence homomorphism into $(B,B)$
\textup(as in \defnref{CP correspondence homomorphism}\textup)
if and only if $\pi\colon A\to M(B)$ is nondegenerate and $(\psi,\pi)$ is Cuntz-Pimsner covariant as a Toeplitz representation.
In particular, $(k_X,k_A)\colon(X,A)\to (\OO_X,\OO_X)$ is Cuntz-Pimsner covariant in the sense of \defnref{CP correspondence homomorphism}.
\end{lem}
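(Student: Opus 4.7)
The plan is to unwind all the definitions in the special case where the codomain correspondence is $(B,B)$, viewing $B$ as a $B$-correspondence over itself, and observe that three of the four conditions in Definition \ref{CP correspondence homomorphism} either become vacuous or reduce to the standard Cuntz-Pimsner covariance condition for Toeplitz representations.

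First I would compute the basic ingredients with $Y = B$. One has $\KK(B)\cong B$ via $\theta_{b,c}\mapsto bc^*$, and the left action $\varphi_B\colon B\to \LL(B)=M(B)$ is the canonical inclusion, which in particular is injective. Thus the kernel condition in the definition of $J_Y$ is vacuous and $\varphi_B(B)=B=\KK(B)$, so $J_Y=B$. Consequently $\ideal{B}{J_Y}=\ideal{B}{B}=M(B)$, so condition~(iii) of \defnref{CP correspondence homomorphism} reduces to $\pi(J_X)\subset M(B)$, which is automatic. Similarly $M_B(Y)=M_B(B)=M(B)$ since $m\cdot B\cup B\cdot m\subset B$ for any $m\in M(B)$, so condition~(i) is also automatic.

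Next I would examine condition~(iv). The extension $\bar{\varphi_B}\colon M(B)\to M(M(B))$ of the canonical inclusion may be identified with the identity on $M(B)$, and $M_B(\KK(Y))=M_B(B)=M(B)$ as above. The diagram in~(iv) therefore collapses to the assertion
\[
\cpct\psi(\varphi_A(a))=\pi(a)\quad\text{for all } a\in J_X,
\]
which is precisely the Cuntz-Pimsner covariance of $(\psi,\pi)$ as a Toeplitz representation. Finally, condition~(ii) is exactly the nondegeneracy of $\pi$. Combining these observations yields the stated equivalence in both directions. A small sanity check to record is that the homomorphism $\cpct\psi\colon \KK(X)\to M(\KK(Y))=M(B)$ produced by \propref{compacts} coincides with the homomorphism of \corref{toeplitz} associated to the Toeplitz representation $(\psi,\pi)$, since both are characterized by sending $\theta_{\xi,\eta}$ to $\psi(\xi)\psi(\eta)^*$.

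For the ``In particular'' claim, $(k_X,k_A)$ is by construction a Cuntz-Pimsner covariant Toeplitz representation, and $k_A\colon A\to \OO_X$ is nondegenerate by the standing assumption that $(X,A)$ is a nondegenerate $C^*$-correspondence (as was explicitly noted after the definition of $\OO_X$). The equivalence just proved therefore applies. There is no genuine obstacle in this lemma; the only point requiring care is bookkeeping with the relative multiplier and $\KK$-compact identifications when $Y=B$, and verifying that the two incarnations of $\cpct\psi$ agree.
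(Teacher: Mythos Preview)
Your proof is correct and follows essentially the same approach as the paper's: the paper's proof simply records the identifications $J_B=\KK(B)=B$, $M_B(B)=\ideal{B}{B}=M_B(\KK(B))=M(B)$, and that $\varphi_B$ is the inclusion $B\hookrightarrow M(B)$, while you have spelled out in detail how these identifications collapse each of the four conditions. One small slip: you write $\bar{\varphi_B}\colon M(B)\to M(M(B))$, but the relevant target is $M(\KK(B))=M(B)$, so $\bar{\varphi_B}$ is literally the identity on $M(B)$---this does not affect your argument.
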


\begin{proof}
This follows from the identifications
$J_B=\KK(B)=B$ and
$M_B(B)=M(B;B)=M_B(\KK(B))=M(B)$,
and the observation that $\varphi_B$ is the inclusion $B\hookrightarrow M(B)$.
\end{proof}

The Cuntz-Pimsner covariant homomorphisms between correspondences are the morphisms in a suitable category, which we now define.

\begin{thm}\label{is a category}
There is a category \cpcat\ that has:
\begin{itemize}
\item
nondegenerate $C^*$-correspondences as \emph{objects}, and

\item
Cuntz-Pimsner covariant homomorphisms $(\psi,\pi)\colon(X,A)\to (M(Y),M(B))$ 
\textup(as in \defnref{CP correspondence homomorphism}\textup) as \emph{morphisms} from $(X,A)$ to $(Y,B)$; 

\item
and in which the \emph{composition} of $(\psi,\pi)\colon(X,A)\to (Y,B)$ and $(\sigma,\tau)\colon(Y,B)\to (Z,C)$ 
is $\bigl(\bar\sigma\circ\psi,\bar\tau\circ\pi)$.
\end{itemize}
\end{thm}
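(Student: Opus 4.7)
The plan has three parts: verifying that the proposed composition is a Cuntz-Pimsner covariant morphism, exhibiting identities, and checking associativity.

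Given Cuntz-Pimsner covariant morphisms $(\psi,\pi)\colon(X,A)\to(M(Y),M(B))$ and $(\sigma,\tau)\colon(Y,B)\to(M(Z),M(C))$, the first step is to make sense of the composition. I would apply the extension theorem \cite[Proposition~A.11]{dkq} (recalled in the preliminaries) with $\kappa=\id_B$ and $\lambda=\tau$, using conditions (i) and (ii) of \defnref{CP correspondence homomorphism} for $(\sigma,\tau)$, to produce a unique $B$-strict to $C$-strictly continuous correspondence homomorphism $(\bar\sigma,\bar\tau)\colon(M_B(Y),M(B))\to(M_C(Z),M(C))$ extending $(\sigma,\tau)$. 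Since $\psi(X)\subset M_B(Y)$ by (i) for $(\psi,\pi)$, the formula $(\bar\sigma\circ\psi,\bar\tau\circ\pi)$ then defines a correspondence homomorphism $(X,A)\to(M(Z),M(C))$.

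Next I verify Cuntz-Pimsner covariance of this composite. Properties (i) and (ii) are routine: $\bar\sigma(M_B(Y))\subset M_C(Z)$ handles (i), and nondegeneracy of $\bar\tau\circ\pi$ follows from $\bar\tau(\pi(A))\tau(B)C=\bar\tau(\pi(A)B)C=\tau(B)C=C$. For (iii), given $a\in J_X$ and $c\in C$, nondegeneracy of $\tau$ lets us factor $c=\tau(b)c'$, so $\bar\tau(\pi(a))c=\tau(\pi(a)b)c'$; since $\pi(a)\in\ideal{B}{J_Y}$ we have $\pi(a)b\in J_Y$, and (iii) for $(\sigma,\tau)$ gives $\tau(\pi(a)b)\in\ideal{C}{J_Z}$, hence $\bar\tau(\pi(a))c\in J_Z$. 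The main obstacle is (iv). My strategy is to first extend $\cpct\sigma\colon\KK(Y)\to M_C(\KK(Z))$ by strict continuity to a map $\bar{\cpct\sigma}$ defined on $M_B(\KK(Y))$, either by invoking \propref{compacts} applied to the extension $(\bar\sigma,\bar\tau)$ or by a direct extension argument, and to establish the chain-rule identity $\cpct{(\bar\sigma\circ\psi)}=\bar{\cpct\sigma}\circ\cpct\psi$ (immediate from \propref{compacts} on rank-one operators and extended by continuity). Then (iv) for $(\psi,\pi)$ reads $\cpct\psi(\varphi_A(a))=\bar\varphi_B(\pi(a))$, while (iv) for $(\sigma,\tau)$ reads $\cpct\sigma(\varphi_B(b))=\bar\varphi_C(\tau(b))$ for $b\in J_Y$; since $\ideal{B}{J_Y}$ is the strict closure of $J_Y$ by \lemref{strict}(i), the strict continuity of $\bar\varphi_B$, $\bar\tau$, $\bar\varphi_C$, and $\bar{\cpct\sigma}$ on bounded sets propagates this identity to $m=\pi(a)\in\ideal{B}{J_Y}$, yielding $\cpct{(\bar\sigma\circ\psi)}(\varphi_A(a))=\bar\varphi_C(\bar\tau(\pi(a)))$ as required.

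The identity morphism at $(X,A)$ is the pair $(\iota_X,\iota_A)$ of canonical inclusions into $(M(X),M(A))$; checking (i)--(iv) of \defnref{CP correspondence homomorphism} is straightforward, using $X\subset M_A(X)$, nondegeneracy of $\iota_A$, that $J_X$ is an ideal of $A$, and the trivial commutativity of the resulting diagram. Associativity then follows from the uniqueness clause of the extension theorem: in any triple composite, the two bracketings yield correspondence homomorphisms that agree on the original source by ordinary associativity, and their strict-to-strict continuous extensions to the appropriate relative-multiplier correspondence must coincide by uniqueness.
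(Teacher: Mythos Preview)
Your proposal is correct and follows the same overall architecture as the paper's proof: invoke \cite[Proposition~A.11]{dkq} to extend $(\sigma,\tau)$ to $(M_B(Y),M(B))$, then check (i)--(iv) of \defnref{CP correspondence homomorphism} for the composite, exhibit the identity morphism, and verify associativity. Your treatments of (i)--(iii) and of the identity morphism are essentially the paper's.

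The one genuine difference is in how you handle condition~(iv), and relatedly associativity. The paper does (iv) by a direct element-level computation: it takes $a\in J_X$, multiplies both sides of the desired identity by $\varphi_C(c)$ for arbitrary $c\in C$, factors $c=\tau(b)c'$ using nondegeneracy of $\tau$, and then chains together the covariance diagrams for $(\sigma,\tau)$ and $(\psi,\pi)$ at the level of elements. Associativity is likewise a bare-hands calculation using the standard identity $\bar{\bar\zeta\circ\sigma}=\bar\zeta\circ\bar\sigma$. You instead argue more abstractly: establish a chain rule $\cpct{(\bar\sigma\circ\psi)}=\bar{\cpct\sigma}\circ\cpct\psi$, use \lemref{strict}(i) to approximate $\pi(a)\in\ideal{B}{J_Y}$ strictly by elements of $J_Y$, and push the identity through by continuity; associativity then falls out of the uniqueness clause in the extension theorem. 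Your route is more conceptual and makes clearer \emph{why} the diagram commutes, but it carries some bookkeeping overhead: you must track that the relevant maps ($\bar\varphi_B$, $\bar{\cpct\sigma}$, $\bar\tau$, $\bar\varphi_C$) are continuous for the \emph{matching} relative-strict topologies (strict on $\ideal{B}{J_Y}$ versus $B$-strict on $M_B(\KK(Y))$, etc.), which is exactly the content of \lemref{strict}(ii) and the $C$-strict machinery. The paper's computation sidesteps those topological checks entirely at the cost of a longer chain of equalities.
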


\begin{proof}
First of all, to see that composition is well-defined, note that since $\tau\colon B\to M(C)$ is nondegenerate by definition, it follows from \cite[Proposition~A.11]{dkq} that $\sigma\colon Y\to M_C(Z)$ extends uniquely to a $B$-strict to $C$-strictly continuous homomorphism
\[
\bigl(\bar\sigma,\bar\tau\bigr)\colon (M_B(Y),M(B))\to (M_C(Z),M(C)).
\]
Thus we get a correspondence homomorphism
\[
(\bar\sigma\circ\psi,\bar\tau\circ\pi)\colon (X,A)\to (M_C(Z),M(C)),
\]
which we must check is Cuntz-Pimsner covariant in the sense of \defnref{CP correspondence homomorphism}. Certainly $\bar\tau\circ\pi\colon A\to M(C)$ is nondegenerate, so it remains to verify items (iii)--(iv) in \defnref{CP correspondence homomorphism}.

For (iii), since $\tau$ is nondegenerate we have
\begin{align*}
(\bar\tau\circ\pi)(J_X)C
&=\bar\tau(\pi(J_X))\tau(B)C\subset \bar\tau\bigl(\ideal{B}{J_Y}\bigr)\tau(B)C\\
&=\tau\bigl(\ideal{B}{J_Y}B\bigr)C\subset \tau(J_Y)C\subset J_Z,
\end{align*}
and similarly $C(\bar\tau\circ\pi)(J_X)\subset J_Z$.
Therefore $\bar\tau\circ\pi(J_X)\subset \ideal{C}{J_Z}$.

For (iv), let $a\in J_X$. We must show that
\[
\bar{\varphi_C}\circ (\bar\tau\circ\pi)(a)=(\bar\sigma\circ\psi)^{(1)}\circ\varphi_A(a)
\]
in $\LL(Z)$,
and by nondegeneracy it suffices to show equality after multiplying on the right by $\varphi_C(c)$ for an arbitrary $c\in C$. Again by nondegeneracy we can factor $c=\tau(b)c'$ for some $b\in B$ and $c'\in C$, and then
\begin{align*}
\bar{\varphi_C}\circ (\bar\tau\circ\pi)(a)\varphi_C(c)
&=\bar{\varphi_C}(\bar\tau(\pi(a))))\varphi_C\bigl(\tau(b)c'\bigr)
\\&=\bar{\varphi_C}\circ\tau\bigl(\pi(a)b\bigr)\varphi_C(c')
\\&=\sigma^{(1)}\circ\varphi_B\bigl(\pi(a)b\bigr)\varphi_C(c')
\\&=\sigma^{(1)}\bigl(\bar{\varphi_B}\circ\pi(a)\varphi_B(b)\bigr)\varphi_C(c')
\\&=\sigma^{(1)}\bigl(\psi^{(1)}\circ\varphi_A(a)\varphi_B(b)\bigr)\varphi_C(c')
\\&=\bar{\sigma^{(1)}}\circ\psi^{(1)}\circ\varphi_A(a)\sigma^{(1)}\circ\varphi_B(b)\varphi_C(c')
\\&=\bigl(\bar\sigma\circ\psi\bigr)^{(1)}\circ\varphi_A(a)\varphi_C\circ\tau(b)\varphi_C(c')
\\&=\bigl(\bar\sigma\circ\psi\bigr)^{(1)}\circ\varphi_A(a)\varphi_C\bigl(\tau(b)c'\bigr)
\\&=\bigl(\bar\sigma\circ\psi\bigr)^{(1)}\circ\varphi_A(a)\varphi_C(c).
\end{align*}
We have thus verified that composition is well-defined in \cpcat.

To see that composition is associative is a routine exercise in the definitions and the properties of ``barring'' (see, e.g., \cite[Appendix~A]{boil}): if also $(\zeta,\rho)\colon(Z,C)\to (W,D)$ in \cpcat\ then
\begin{align*}
(\zeta,\rho)\circ\bigl((\sigma,\tau)\circ(\psi,\pi)\bigr)
&=(\zeta,\rho)\circ(\bar\sigma\circ\psi,\bar\tau\circ\pi)
\\&=\bigl(\bar\zeta\circ(\bar\sigma\circ\psi),\bar\rho\circ(\bar\tau\circ\pi)\bigr)
\\&=\bigl((\bar\zeta\circ\bar\sigma)\circ\psi,(\bar\rho\circ\bar\tau)\circ\pi\bigr)
\\&=\Bigl(\bar{\bar\zeta\circ\sigma}\circ\psi,\bar{\bar\rho\circ\tau}\circ\pi\Bigr)
\\&=\bigl(\bar\zeta\circ\sigma,\bar\rho\circ\tau\bigr)\circ(\psi,\pi)
\\&=\bigl((\zeta,\rho)\circ(\sigma,\tau)\bigr)\circ(\psi,\pi).
\end{align*}

It is now clear that $(\id_X,\id_A)$ is an identity morphism on each object $(X,A)$, and therefore \cpcat\ is a category.
\end{proof}

\begin{cor} \label{functor}
Let $(X,A)$ and $(Y,B)$ be nondegenerate $C^*$-correspondences, and let $(\psi,\pi)\colon(X,A)\to (M(Y),M(B))$ be a Cuntz-Pimsner covariant correspondence homomorphism. Then there is a unique homomorphism $\OO_{\psi,\pi}$ making the diagram
\[
\xymatrix@C+30pt{
(X,A) \ar[r]^-{(\psi,\pi)} \ar[d]_{(k_X,k_A)}
&(M_B(Y),M(B)) \ar[d]^{(\bar{k_Y},\bar{k_B})}
\\
\OO_X \ar[r]_-{\OO_{\psi,\pi}}
&M_B(\OO_Y)
}
\]
commute.
Moreover, $\OO_{\psi,\pi}$ is nondegenerate, and is injective if $\pi$ is.
\end{cor}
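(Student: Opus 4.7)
The plan is to obtain $\OO_{\psi,\pi}$ via the universal property of $\OO_X$ applied to a Cuntz-Pimsner covariant Toeplitz representation built by composition in \cpcat. First, by \lemref{consistent}, $(k_Y,k_B)$ is a morphism in \cpcat\ from $(Y,B)$ to $(\OO_Y,\OO_Y)$, so by \thmref{is a category} the composition $(\bar{k_Y}\circ\psi,\bar{k_B}\circ\pi)\colon(X,A)\to(M(\OO_Y),M(\OO_Y))$ is Cuntz-Pimsner covariant. Since $\bar{k_B}\circ\pi$ is nondegenerate, applying \lemref{consistent} in the opposite direction exhibits this composition as a Cuntz-Pimsner covariant Toeplitz representation of $(X,A)$ in $\OO_Y$, and the universal property of $\OO_X$ then furnishes a unique $C^*$-homomorphism $\OO_{\psi,\pi}\colon\OO_X\to M(\OO_Y)$ satisfying $\OO_{\psi,\pi}\circ k_X=\bar{k_Y}\circ\psi$ and $\OO_{\psi,\pi}\circ k_A=\bar{k_B}\circ\pi$.

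Next I would verify that the image of $\OO_{\psi,\pi}$ in fact lies in $M_B(\OO_Y)$. Since $M_B(\OO_Y)$ is a $C^*$-subalgebra of $M(\OO_Y)$ and $\OO_X$ is generated by $k_X(X)\cup k_A(A)$, it suffices to check on generators. For $\xi\in X$, the hypothesis $\psi(\xi)\in M_B(Y)$ together with the correspondence-homomorphism property of the extension $(\bar{k_Y},\bar{k_B})$ yields $k_B(b)\cdot\bar{k_Y}(\psi(\xi))=\bar{k_Y}(b\cdot\psi(\xi))=k_Y(b\cdot\psi(\xi))\in\OO_Y$, and symmetrically from the right; the analogous check for $k_A(a)$ uses that $b\pi(a)\in B$. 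Nondegeneracy of $\OO_{\psi,\pi}$ follows by computing
\[
\clspn\OO_{\psi,\pi}(k_A(A))k_B(B)=\clspn\bar{k_B}(\pi(A))k_B(B)=\clspn k_B(\pi(A)B)=k_B(B),
\]
using nondegeneracy of $\pi$, after which nondegeneracy of $k_B$ gives $\clspn\OO_{\psi,\pi}(\OO_X)\OO_Y=\OO_Y$.

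For the injectivity claim when $\pi$ is injective, I plan to invoke Katsura's gauge-invariant uniqueness theorem. First, $\bar{k_B}\circ\pi$ is injective since $k_B$ is injective (a standard fact for Cuntz-Pimsner algebras) and $\pi$ is. The gauge action $\gamma\colon\T\to\aut(\OO_Y)$ extends strictly to $\bar\gamma\colon\T\to\aut(M(\OO_Y))$, fixing $\bar{k_B}(M(B))$ and scaling $\bar{k_Y}(M_B(Y))$ by $z$, hence restricts to an action of the required form on the image $\OO_{\psi,\pi}(\OO_X)$. The main obstacle I anticipate is verifying that this restricted action is norm-continuous on the image---the extension to all of $M(\OO_Y)$ is only strictly continuous in general---but this follows from the observation that on each finite sum of words in the generators $\bar{k_Y}(\psi(\xi))$, $\bar{k_B}(\pi(a))$, and their adjoints, the action is scalar multiplication by a fixed power of $z$, hence norm-continuous on a dense $*$-subalgebra, and therefore on the entire image. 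Katsura's GIUT then yields injectivity of $\OO_{\psi,\pi}$.
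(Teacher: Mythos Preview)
Your proof is correct and follows the paper's route: compose in \cpcat\ with $(k_Y,k_B)$, invoke the universal property of $\OO_X$, deduce nondegeneracy from that of $\pi$ and $k_B$, and obtain injectivity via Katsura's gauge-invariant uniqueness theorem. The two points you take care to verify---that the image lands in $M_B(\OO_Y)$ and that the restricted gauge action on the image is norm-continuous---are left implicit in the paper but are minor elaborations rather than a different approach.
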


\begin{proof}
Applying Theorem~\ref{is a category} with $(Z,C)$ being the $C^*$-algebra $\OO_Y$ viewed as a correspondence over itself in the canonical way, we see that
$(\bar{k_Y}\circ\psi,\bar{k_B}\circ\pi)$ is a Cuntz-Pimsner covariant Toeplitz representation $(\sigma,\nu)\colon X\to M_B(\OO_Y)$.
Then the universal property of $\OO_X$ gives the unique homomorphism
\[
\OO_{\psi,\pi}=\bigl(\bar{k_Y}\circ\psi\bigr)\times\bigl(\bar{k_B}\circ\pi\bigr).
\]

Nondegeneracy of $\OO_{\psi,\pi}$ follows from 
nondegeneracy of $\pi$ and $k_B$, since
\begin{align*}
\OO_Y 
&= \bar{k_B}\circ\pi(A)\OO_Y
= \OO_{\psi,\pi}\circ k_A(A)\OO_Y\\
&= \OO_{\psi,\pi}(k_A(A))\OO_Y
\subseteq \OO_{\psi,\pi}(\OO_X)\OO_Y \subseteq \OO_Y
\end{align*}
implies equality throughout.

If $\pi$ is injective, then so is $\bar{k_B}\circ\pi$, so to show $\OO_{\psi,\pi}$ is injective we can apply the Gauge-Invariant Uniqueness Theorem \cite[Theorem~6.4]{KatsuraCorrespondence}:
let $\gamma\colon\T\to\aut\OO_Y$ be the gauge action.
It suffices to observe that for all $z\in\T$, $\xi\in X$, and $a\in A$ we have
\begin{align*}
\bar{\gamma_z}\circ\bar{k_Y}\circ\psi(\xi)&=z\bar{k_Y}\circ\psi(\xi)\\
\bar{\gamma_z}\circ\bar{k_B}\circ\pi(a)&=\bar{k_B}\circ\pi(a).
\qedhere
\end{align*}
\end{proof}

Recall from, \emph{e.g.}, \cite{boil}, that there is a category \csnd\ that has:
\begin{itemize}
\item $C^*$-algebras  as \emph{objects}, and

\item nondegenerate homomorphisms $\pi:A\to M(B)$ as \emph{morphisms} from $A$ to $B$;

\item and in which the \emph{composition} of $\pi\colon A\to B$ and $\tau\colon B\to C$ is $\bar\tau\circ\pi$.
\end{itemize}

\begin{thm}\label{functor prove}
The assignments $X\mapsto \OO_X$ and $(\psi,\pi)\mapsto \OO_{\psi,\pi}$ define a functor from \cpcat\ to \csnd.
\end{thm}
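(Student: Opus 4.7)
The plan is to verify the two functor axioms using the uniqueness clause of \corref{functor}.

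Preservation of identities should be the easy part: for any object $(X,A)$, both $\OO_{(\id_X,\id_A)}$ and $\id_{\OO_X}$ are nondegenerate homomorphisms $\OO_X\to M(\OO_X)$ whose compositions with $k_X$ and $k_A$ recover $k_X$ and $k_A$ respectively, so the uniqueness clause of \corref{functor} will force $\OO_{(\id_X,\id_A)}=\id_{\OO_X}$.

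For composition, given morphisms $(\psi,\pi)\colon(X,A)\to(Y,B)$ and $(\sigma,\tau)\colon(Y,B)\to(Z,C)$ in \cpcat, I want to show
\[
\OO_{(\bar\sigma\circ\psi,\bar\tau\circ\pi)} = \bar{\OO_{\sigma,\tau}}\circ\OO_{\psi,\pi}
\]
as nondegenerate homomorphisms from $\OO_X$ into $M(\OO_Z)$. By uniqueness in \corref{functor}, it will suffice to verify that the right-hand side composed with $k_X$ equals $\bar{k_Z}\circ\bar\sigma\circ\psi$ and composed with $k_A$ equals $\bar{k_C}\circ\bar\tau\circ\pi$. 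Substituting the defining relations of $\OO_{\psi,\pi}$ reduces these in turn to the two ``extension-is-functorial'' identities
\[
\bar{\OO_{\sigma,\tau}}\circ\bar{k_Y}=\bar{k_Z}\circ\bar\sigma \text{ on } M_B(Y),\qquad \bar{\OO_{\sigma,\tau}}\circ\bar{k_B}=\bar{k_C}\circ\bar\tau \text{ on } M(B),
\]
each of which I would establish by checking that both sides agree on a dense subset ($Y$, respectively $B$) and that the two extensions share a common continuity property.

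The only real obstacle is the continuity bookkeeping required to invoke uniqueness of these extensions. On the $M_B(Y)$ side, $\bar{k_Y}$ is $B$-strict to strictly continuous by \cite[Proposition~A.11]{dkq} applied to $(k_Y,k_B)$, while $\bar{\OO_{\sigma,\tau}}$ is strict-to-strict continuous in the standard way; likewise $\bar\sigma$ is $B$-strict to $C$-strictly continuous as noted in the proof of \thmref{is a category}, and $\bar{k_Z}$ is $C$-strict to strict continuous. Hence both sides of the first identity are $B$-strict to strict continuous extensions of the same map on $Y$, and agreement on all of $M_B(Y)$ follows from $B$-strict density of $Y$. The analogous, simpler argument settles the second identity on $M(B)$, and functoriality follows.
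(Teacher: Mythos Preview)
Your proposal is correct and follows essentially the same route as the paper: both arguments handle identities by uniqueness and handle composition by checking on generators via \corref{functor}, the crux being the identity $\bar{k_Z}\circ\bar\sigma=\bar{\OO_{\sigma,\tau}}\circ\bar{k_Y}$ (and its coefficient-algebra analogue). The only difference is that you spell out the $B$-strict/$C$-strict continuity bookkeeping that justifies this extension identity, whereas the paper simply writes it as a single equality in its chain of computations, treating the ``barring is functorial'' step as routine.
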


\begin{proof}
First of all, it follows from Corollary~\ref{functor} that if $(\psi,\pi)\colon (X,A)\to (Y,B)$ in \cpcat\ then $\OO_{\psi,\pi}$ is a morphism from $\OO_X$ to $\OO_Y$ in \csnd.
Moreover, $\OO_{\id_X,\id_A}=\id_{\OO_X}$  by uniqueness.

To see that compositions are preserved, let $(\psi,\pi)\colon(X,A)\to (Y,B)$ and $(\sigma,\tau)\colon(Y,B)\to (Z,C)$ in 
\cpcat.
We have
\begin{align*}
\OO_{(\sigma,\tau)\circ(\psi,\pi)}\circ &k_X
=\bar{k_Z}\circ(\bar\sigma\circ\psi)=\bar{k_Z}\circ\bar\sigma\circ\psi\\
&=\bar{\OO_{\sigma,\tau}}\circ\bar{k_Y}\circ\psi=\bar{\OO_{\sigma,\tau}}\circ\OO_{\psi,\pi}\circ k_X,
\end{align*}
and similarly
\[
\OO_{(\sigma,\tau)\circ(\psi,\pi)}\circ k_A=\bar{\OO_{\sigma,\tau}}\circ\OO_{\psi,\pi}\circ k_A,
\]
so that $\OO_{(\sigma,\tau)\circ(\psi,\pi)}=\OO_{\sigma,\tau}\circ\OO_{\psi,\pi}$ in \csnd.
\end{proof}

\section{Applications}\label{applications}

We give three applications of \corref{functor}.

\subsection*{Topological graph actions}

Our first application is historical; we show that in \cite{dkq} the germ of the idea of \corref{functor} was introduced in an ad-hoc way, in a very special case.

Theorem~5.6 of \cite{dkq} shows that if a locally compact group $G$ acts freely and properly on a topological graph~$E$, then the quotient $E/G$ is also a topological graph, and $C^*(E)\rtimes_r G$ and $C^*(E/G)$ are Morita equivalent.
The strategy for proving Morita equivalence in \cite{dkq} is to construct an isomorphism of $C^*(E/G)$ with Rieffel's generalized fixed-point algebra $C^*(E)^G$, after showing that the action of $G$ on $C^*(E)$ is saturated and proper in the sense of~\cite{proper}, and then appealing to the imprimitivity theorem $C^*(E)\rtimes_r G\sim_M C^*(E)^G$ of \cite{proper}.

By definition, $C^*(E)^G$ is a $C^*$-subalgebra of $M(C^*(E))$, and
the isomorphism of $C^*(E/G)$ onto $C^*(E)^G$ is constructed from a Cuntz-Pimsner covariant Toeplitz representation $(\tau,\pi)$ of the topological-graph correspondence $(X(E/G),C_0((E/G)^0))$ in $M(C^*(E))$,
which in turn is constructed via a correspondence homomorphism $(\mu,\nu)$ from $(X(E/G),C_0((E/G)^0))$ to $(M(X(E)),M(C^0(E^0)))$.
The proof in \cite{dkq} that $(\tau,\pi)$ is Cuntz-Pimsner covariant 
essentially uses a special case of the concept of Cuntz-Pimsner covariant correspondence homomorphisms defined in 
 \defnref{CP correspondence homomorphism}.
 
We will now explain this in more detail.
First, recall that the $C_0(E^0)$-correspondence $X(E)$ is a completion of $C_c(E^1)$,
the Katsura ideal $J_{X(E)}$ can be identified with $C_0(E^0_{\rg})$, where $E^0_{\rg}$ is a certain open subset of $E^0$,
and the topological-graph algebra $C^*(E)$ is the Cuntz-Pimsner algebra $\OO_{X(E)}$.
It will help the exposition to introduce the following temporary notation:
\begin{itemize}
\item $A=C_0((E/G)^0)$
\item $X=X((E/G))$
\item $A_{\rg}=J_X$
\item $B=C_0(E^0)$
\item $Y=X(E)$
\item $B_{\rg}=J_Y$
\item $q:E\to E/G$ is the quotient map (both for edges $E^1$ and vertices $E^0$).
\end{itemize}
(Warning: the roles of $X,A$ and $Y,B$ between \cite{dkq} and here are switched, to allow more convenient reference to the methods of the current paper.)
\cite{dkq} constructed the correspondence homomorphism
\[
(\mu,\nu):(X,A)\to (M_B(Y),M(B))
\]
starting with
\begin{itemize}
\item $\nu(f)=f\circ q$
\item $(\mu(\xi)\cdot g)(e)=\xi(q(e))g(e)$ for $\xi\in C_c((E/G)^1)$, $g\in C_c(E^0)$, and $e\in E^1$.
\end{itemize}
Then the pair $(X,A)$ was mapped into $M(\OO_Y)$ in \cite{dkq} by the commutative diagram
\[
\xymatrix@C+30pt{
(X,A) \ar[r]^-{(\mu,\nu)} \ar[dr]_{(\tau,\pi)}
&(M_B(Y),M(B)) \ar[d]^{(\bar{k_Y},\bar{k_B})}
\\
&M_B(\OO_Y)
}
\]
In \cite{dkq} it was then recognized that Cuntz-Pimsner covariance of $(\tau,\pi)$ is expressed by commutativity of the left-hand triangle of the following diagram, which is a version of \cite[page~1547, diagram~(5)]{dkq} in which some of the notation has been modified to be consistent with the current paper:
\begin{equation}\label{commute}
\xymatrix{
J_X \ar[rr]^{\nu'} \ar[dr]^{\pi|} \ar[dd]_{\varphi_A|}
&&M_B(J_Y) \ar[dl]_{\bar{k_B|}} \ar[dd]^{\bar{\varphi_B|}}
\\
&M_B(\OO_Y)
\\
\KK(X) \ar[ur]^{\tau^{(1)}} \ar[rr]_{\mu^{(1)}}
&&M_B(\KK(Y)), \ar[ul]_{\bar{k_Y^{(1)}}}
}
\end{equation}
and the strategy was to verify that the other parts of the diagram commute.
Here the homomorphism $\nu'$ is constructed from the commutative diagram
\[
\xymatrix{
J_X \ar[rr]^{\nu|} \ar[dr]_{\nu'}
&&M(B;J_Y) \ar@{_(->}[dl]
\\
&M_B(J_Y),
}
\]
where the inclusion $M(B:J_Y)\hookrightarrow M_B(J_Y)$ is given by restriction
\[
g\mapsto g|_{Y^0_{\rg}}.
\]
In \cite{dkq} it was not recognized that in fact it would be better to do away with the map $\nu'$ altogether, so that the outer square of \eqref{commute} is replaced by
\[
\xymatrix{
J_X \ar[r]^-{\nu|} \ar[d]_{\varphi_A|}
&M(B;J_Y) \ar[d]^{\bar{\varphi_B}|}
\\
\KK(X) \ar[r]_-{\mu^{(1)}}
&M_B(\KK(Y)),
}
\]
whose commutativity is precisely our definition \defnref{CP correspondence homomorphism} of Cuntz-Pimsner covariance of the correspondence homomorphism $(\mu,\nu)$.
The computations in \cite{dkq} were much more painstakingly ``bare-hands'' than in the current paper, because, again, the techniques were entirely ad-hoc, whereas here we take a more conceptual and systematic approach, developing appropriate machinery along the way.

\subsection*{Topological graph coactions}

In a forthcoming paper \cite{tgcoact}, a continuous map (``cocycle'') $\kappa$ on a topological graph $E$ with values in a locally compact group $G$ is used to construct a coaction~$\delta$ of~$G$ on~$C^*(E)$,
with an eye toward proving that the crossed product $C^*(E)\rtimes_\delta G$ is isomorphic to the $C^*$-algebra of the skew-product topological graph $E\rtimes_\kappa G$,
thereby generalizing \cite[Theorem~2.4]{kqrskew} from the discrete case.

To describe this application, let
\begin{itemize}
\item $A=C_0(E^0)$
\item $X=X(E)$,
\end{itemize}
and recall, e.g.,  from the discussion at the beginning of this section, that $C^*(E)$ is the Cuntz-Pimsner algebra of the $C^*$-correspondence $(X,A)$.

To be a coaction, $\delta$ must in particular
be a homomorphism from $C^*(E)$ to $M_{1\otimes C^*(G)}(C^*(E)\otimes C^*(G))$.
As usual, $\delta$ is constructed from a Cuntz-Pimsner covariant Toeplitz representation 
$(\delta_X,\delta_{A})$ 
of $(X,A)$ in the $C^*$-algebra $M(C^*(E)\otimes C^*(G))$,
which in turn is constructed via a correspondence homomorphism $(\sigma,\id_A\otimes 1)$
from $(X,A)$ to $(M(X\otimes C^*(G)),M(A\otimes C^*(G)))$.
Techniques based upon 
\corref{functor} will be used to 
show that the correspondence homomorphsim $(\sigma,\id_A\otimes 1)$ gives rise to a coaction $\delta$ on $C^*(E)$.
Interestingly, however, we will need a slight strengthening of the Cuntz-Pimsner covariance condition of \defnref{CP correspondence homomorphism}.
The problem is that, due to nonexactness of minimal $C^*$-tensor products,
we have no reason to believe that the minimal tensor product $\OO_X\otimes C^*(G)$ coincides with the Cuntz-Pimsner algebra $\OO_{X\otimes C^*(G)}$ of the external-tensor-product correspondence (where $C^*(G)$ is regarded as a correspondence over itself in the standard way).
In fact, the basic theory of coactions on Cuntz-Pimsner algebras will require a significant amount of work, which we will do in \cite{KQRCorrespondenceCoaction}.

Anyway, once we have the machinery necessary to construct coactions on Cuntz-Pimsner algebras, our application in \cite{tgcoact} will go roughly as follows:
first of all, a $G$-valued cocycle on a topological graph $E$ is just a continuous map $\kappa:E^1\to G$.
Since the $A$-correspondence $X$ is a completion of $C_c(E^1)$, and the group $G$ embeds as unitary multipliers of $C^*(G)$, we are led to regard the cocycle $\kappa$ as an adjointable operator $v$ on $X\otimes C^*(G)$, and then we are able to define a coaction on $X$ by $\sigma(\xi)=v(\xi\otimes 1)$, where $\xi\otimes 1$ is regarded as a multiplier of the correspondence $X\otimes C^*(G)$.
This is a continuous version of the coaction $\Chi_{\{e\}}\mapsto \Chi_{\{e\}}\otimes \kappa(e)$ of \cite{kqrskew}.
We emphasize that the justification that this actually gives a coaction will depend upon the preparation to come in \cite{KQRCorrespondenceCoaction}.

\subsection*{$C^*$-correspondence action crossed products.}

Let $(\gamma,\alpha)$ be an action of a locally compact group $G$ on a nondegenerate correspondence $(X,A)$. 
The \emph{crossed product} is the completion $(X\rtimes_\gamma G,A\rtimes_\alpha G)$ of the pre-correspondence $(C_c(G,X),C_c(G,A))$ with operations
\begin{align*}
(f\cdot\xi)(s)&=\int_G f(t)\cdot\gamma_t\bigl(\xi(t\inv s)\bigr)\,dt\\
(\xi \cdot f)(s)&=\int_G \xi(t)\cdot\alpha_t\bigl(f(t^{-1}s)\bigr)\,dt\\
\inn \xi \eta (s)&=\int_G \alpha_{t^{-1}}\bigl(\bigl\<\xi(t),\eta(ts)\bigr\>\bigr)\,dt
\end{align*}
for $f,g\in C_c(G,A)$ and $\xi,\eta\in C_c(G,X)$.
(We refer to, e.g., 
\cite{taco}, 
\cite{HN},
\cite[Chapters~2 and 3]{enchilada}, and
\cite{K}
for the elementary theory of actions and crossed products for correspondences.)

Since $(\gamma_s,\alpha_s)\colon (X,A)\to (X,A)$ is a correspondence homomorphism
for each $s\in G$, Proposition~\ref{compacts} provides 
homomorphisms $\gamma_s^{(1)}\colon\KK(X)\to\KK(X)$,
which by uniqueness give rise to an action $(\gamma^{(1)},\gamma,\alpha)$ of $G$ on the correspondence $(\KK(X),X,A)$, and such that $\varphi_A\colon A\to M(\KK(X))$ is $\alpha-\gamma^{(1)}$ equivariant.

There is an isomorphism (see, \emph{e.g.}, \cite[3.11]{K})
\[
\tau\colon\KK(X\rtimes_\gamma G)\iso \KK(X)\rtimes_{\gamma^{(1)}} G
\]
satisfying
\[
 \tau(\theta_{\xi,\eta})(s) = \int_G \theta_{\xi(t),\gamma_s(\eta(s^{-1}t))} \Delta(s^{-1}t) dt,
\]
where $\xi, \eta \in C_c(G,X), s \in G$ and $\Delta$ is the modular function of $G$,
and moreover the diagram
\[
\xymatrix@C+30pt{
A\rtimes_\alpha G \ar[r]^-{\varphi_{A\rtimes_\alpha G}} \ar[dr]_{\varphi_A\rtimes G}
&M\bigl(\KK(X\rtimes_\gamma G)\bigr) \ar[d]^{\bar\tau}_\cong
\\
&M\bigl(\KK(X)\rtimes_{\gamma^{(1)}} G\bigr)
}
\]
commutes.

Let $(i_A,i_G)\colon(A,G)\to M(A\rtimes_\alpha G)$ be the canonical covariant homomorphism.

\begin{prop} \label{prop:morphism}
There is a Cuntz-Pimsner covariant correspondence homomorphism
\[
(i_X,i_A)\colon(X,A)\to \bigl(M(X \rtimes_\gamma G), M(A \rtimes_\alpha G)\bigr)
\]
such that for $x \in X, f \in C_c(G,A), s \in G$ we have
\begin{equation}\label{i_X}
\bigl(i_X(x)\cdot f\bigr)(s) = x\cdot f(s).
\end{equation}
\end{prop}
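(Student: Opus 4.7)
The plan is to define $i_X(x)$ as an adjointable operator from the Hilbert $A\rtimes_\alpha G$-module $A \rtimes_\alpha G$ to $X \rtimes_\gamma G$, verify the three correspondence homomorphism axioms for $(i_X, i_A)$, and then check the four conditions in \defnref{CP correspondence homomorphism}. For the first step, the formula \eqref{i_X}, paired with the candidate adjoint $(i_X(x)^* \xi)(s) = \inn{x}{\xi(s)}$ defined on $\xi \in C_c(G,X)$, gives maps between $C_c(G,A)$ and $C_c(G,X)$ which a direct crossed-product inner-product computation shows are adjoint; the norm estimate $\|i_X(x) \cdot f\| \le \|x\| \|f\|$ then extends $i_X(x)$ to an element of $\LL(A \rtimes_\alpha G, X \rtimes_\gamma G) = M(X \rtimes_\gamma G)$. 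The correspondence homomorphism axioms reduce to pointwise identities on $C_c(G,A)$; for instance, axiom (iii) follows from $(\inn{i_X(x)}{i_X(y)} \cdot f)(s) = \inn{x}{y \cdot f(s)} = \inn{x}{y} f(s) = (i_A(\inn{x}{y}) \cdot f)(s)$.

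For the Cuntz-Pimsner covariance conditions, (i) is immediate from \eqref{i_X}: the product $i_X(x) \cdot f$ already lies in $C_c(G,X) \subset X \rtimes_\gamma G$ for $f \in C_c(G,A)$, while left multiplication of $i_X(x)$ by elements of $A \rtimes_\alpha G$ is handled by the convolution formula for $f \cdot \xi$. Condition (ii) is the standard nondegeneracy of $i_A\colon A \to M(A\rtimes_\alpha G)$. For (iii), I would first observe that $\alpha$-invariance of $J_X$, a consequence of the $\alpha$-equivariance of $\varphi_A$ and \propref{compacts}, gives $i_A(a)\cdot f$ and $f \cdot i_A(a)$ in $J_X \rtimes_\alpha G$ for $a \in J_X$ and $f \in A \rtimes_\alpha G$; then apply the containment $J_X \rtimes_\alpha G \subset J_{X \rtimes_\gamma G}$, which is obtained by pushing $(\varphi_A \rtimes G)(J_X \rtimes_\alpha G) \subset \KK(X) \rtimes_{\gamma^{(1)}} G$ through the isomorphism $\bar\tau$ displayed above the proposition.

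The main obstacle is verifying the commutative diagram (iv), namely $\bar{\varphi_{A \rtimes_\alpha G}} \circ i_A = \cpct{i_X} \circ \varphi_A$ on $J_X$. Using the triangle above the proposition, $\varphi_{A\rtimes_\alpha G} = \bar\tau^{-1} \circ (\varphi_A \rtimes G)$, so the left-hand side becomes $\bar\tau^{-1}(i_{\KK(X)}(\varphi_A(a)))$ via naturality of canonical embeddings into crossed-product multipliers (that is, $\bar{\varphi_A \rtimes G} \circ i_A = i_{\KK(X)} \circ \varphi_A$ on $A$). By \propref{compacts}, the right-hand side is determined on rank-one operators via $\cpct{i_X}(\theta_{x,y}) = i_X(x) i_X(y)^*$, so it suffices to verify the identity $\bar\tau^{-1}(i_{\KK(X)}(\theta_{x,y})) = i_X(x) i_X(y)^*$ for $x,y \in X$. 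I would check this by applying both sides to $\xi \in C_c(G, X)$ and unwinding the explicit formula for $\tau$ together with the definitions of $i_X$ and its adjoint; the resulting integral computation is delicate but elementary, and once it is in hand the proposition follows.
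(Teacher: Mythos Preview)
Your construction of $i_X(x)$ and verification of the correspondence-homomorphism axioms match the paper's argument essentially line for line, including the adjoint formula $(i_X(x)^*\xi)(s)=\langle x,\xi(s)\rangle$ and the check of axiom~(iii).

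The genuine divergence is in how you handle Cuntz--Pimsner covariance. You propose to verify all four conditions of \defnref{CP correspondence homomorphism} directly, and you flag the commuting square~(iv) as ``the main obstacle,'' outlining a computation that passes through the isomorphism $\tau\colon\KK(X\rtimes_\gamma G)\to\KK(X)\rtimes_{\gamma^{(1)}}G$ and reduces to checking $\bar\tau^{-1}(i_{\KK(X)}(\theta_{x,y}))=i_X(x)i_X(y)^*$ on $C_c(G,X)$. That route should succeed, but it is not what the paper does. The paper instead shows that $(i_X,i_A)$ is \emph{nondegenerate} as a correspondence homomorphism---the extra ingredient being $\overline{i_X(X)\cdot(A\rtimes_\alpha G)}=X\rtimes_\gamma G$, argued via $i_X(x)\cdot(i_A(a)i_G(g))=i_X(x\cdot a)i_G(g)$---and then invokes \lemref{lem:cpdefn}: for a nondegenerate correspondence homomorphism, condition~(iv) is automatic (by \lemref{lem:nondegcomm}), so only~(i) and~(iii) need checking. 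This bypasses your ``main obstacle'' entirely. Your approach has the merit of being self-contained and not requiring nondegeneracy of $i_X$, but the paper's route is considerably shorter.

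One small gap in your sketch of~(iii): pushing $(\varphi_A\rtimes G)(J_X\rtimes_\alpha G)\subset\KK(X)\rtimes_{\gamma^{(1)}}G$ through $\bar\tau^{-1}$ only shows $\varphi_{A\rtimes_\alpha G}(J_X\rtimes_\alpha G)\subset\KK(X\rtimes_\gamma G)$; it does not immediately give the annihilation condition in Katsura's definition of $J_{X\rtimes_\gamma G}$. The paper simply cites \cite[Proposition~2.7]{HN} for the full containment $J_X\rtimes_\alpha G\subset J_{X\rtimes_\gamma G}$.
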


\begin{proof}
We first claim that
for fixed $x \in X$, \eqref{i_X} uniquely determines
an operator $i_X(x)\colon A \rtimes_\alpha G \to X \rtimes_\gamma G$ with adjoint given by
\[
 \bigl(i_X(x)^*\xi\bigr)(s) = \inn x {\xi(s)}
\]
for $\xi \in C_c(G,X)$ and $s \in G$. 
Indeed,
\eqref{i_X} certainly defines a right $C_c(G,A)$-module map $C_c(G,A)\to C_c(G,X)$, and we can check the adjoint property on generators:
for any $\xi \in C_c(G,X)$, $f \in C_c(G,A)$, and $s \in G$ we simply calculate
\begin{align*}
\bigl\<i_X(x)f,\xi\bigr\>(s)
&=\int_G \alpha_{t^{-1}}\bigl(\bigl\<(i_X(x)f)(t),\xi(ts)\bigr\>\bigr)\,dt
\\&=\int_G \alpha_{t\inv}\bigl(f(t)^*\<x,\xi(ts)\>\bigr)\,dt
\\&=\int_G \alpha_t\bigl(f(t\inv)\Delta(t\inv)\<x,\xi(t\inv s)\>\bigr)\,dt
\\&=\int_G f^*(t)\alpha_t\bigl(\<x,\xi(t\inv s)\>\bigr)\,dt,
\end{align*}
proving the claim.

It is now straightforward to verify that the pair $(i_X,i_A)$ is a correspondence
homomorphism. For example, for $x,y \in X, f \in C_c(G,A)$ and $s \in G$ we calculate
\begin{align*}
 \bigl(\inn {i_X(x)} {i_X(y)} f\bigr)(s) 
 &= \bigl(i_X(x)^*i_X(y) \cdot f\bigr)(s) 
= \bigl\< x, (i_X(y)\cdot f(s)\bigr\>\\
 &= \inn x {y\cdot f(s)} 
 = \inn x y f(s) 
 = \bigl(i_A(\inn x y)f\bigr)(s)
\end{align*}
as required.

To show that $(i_X,i_A)$ is Cuntz-Pimsner covariant, 
by Lemma~\ref{lem:cpdefn} it suffices to show 
that $(i_X,i_A)$ is nondegenerate
and satisfies items~(i) and~(iii) in Definition~\ref{CP correspondence homomorphism}.

We already know that the coefficient map $i_A$ is nondegenerate. 
Then for $x \in X$, $a\in A$, $g\in C_c(G)$ we have
\[
i_X(x)\cdot \bigr(i_A(a)i_G(g)\bigr)
=i_X(x\cdot a)i_G(g),
\]
so nondegeneracy of $(i_X,i_A)$ follows since $X\cdot A=X$ and
\[
X\rtimes_\gamma G=\bar{i_X(X)\cdot i_G(C_c(G))}.
\]

Item~(i),
that $i_X(X) \subset M_{A \rtimes_\alpha G}(X \rtimes_\gamma G)$, is clear from the definition.
To verify item~(iii), that $i_A(J_X) \subset \ideal{A \rtimes_\alpha G}{J_{X \rtimes_\gamma G} }$, fix $a \in J_X$. Firstly, \cite[Lemma 2.6(a)]{HN} says that $J_X$ is $\alpha$-invariant, and we know from \cite[Proposition 2.7]{HN} that $J_X \rtimes_\alpha G \subset J_{X \rtimes_\gamma G}$. For any $f \in C_c(G,A)$ and $s \in G$ we have
\[
 \bigl(i_A(a)f\bigr)(s) =a f(s) \in J_X
\]
so $i_A(a)f \in C_c(G,J_X)$. Hence we get
\[
 i_A(J_X)(A \rtimes_\alpha G) \subset J_X \rtimes_\alpha G \subset J_{X \rtimes_\gamma G}
\]
as required.
\end{proof}

\begin{prop}
With $i_X$ as in \propref{prop:morphism}, let
\[
\OO_{i_X,i_A}\colon\OO_X\to M(\OO_{X\rtimes_\gamma G})
\]
be the nondegenerate homomorphism vouchsafed by \corref{functor}.
Also define a strictly continuous unitary homomorphism
$u\colon G\to M(\OO_{X\rtimes_\gamma G})$ by
\[
u=\bar{k_{A\rtimes_\alpha G}}\circ i_G.
\]
Then
the pair $(\OO_{i_X,i_A},u)$ defines a covariant homomorphism of the $C^*$-dynamical system $(\OO_X,G,\beta)$ in the Cuntz-Pimsner algebra $\OO_{X \rtimes_\gamma G}$. 
\end{prop}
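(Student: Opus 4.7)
The plan is to establish the covariance relation
$$\OO_{i_X,i_A}(\beta_s(a)) = u_s\,\OO_{i_X,i_A}(a)\,u_s^* \midtext{for all} a \in \OO_X,\ s \in G,$$
after first identifying $\beta$ as the action of $G$ on $\OO_X$ obtained from the functoriality of \thmref{functor prove}: since $(\gamma_s,\alpha_s)\colon (X,A)\to(X,A)$ is a correspondence automorphism for each $s \in G$, the functor yields $\beta_s := \OO_{\gamma_s,\alpha_s}$, with $\beta_s\circ k_X = k_X\circ\gamma_s$ and $\beta_s\circ k_A = k_A\circ\alpha_s$ coming directly from the defining commutative diagram in \corref{functor}.

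The core of the argument is to verify, at the level of $M(X \rtimes_\gamma G)$ and $M(A \rtimes_\alpha G)$, the two covariance identities
\begin{align*}
i_X(\gamma_s(\xi)) &= i_G(s)\cdot i_X(\xi)\cdot i_G(s)^*,\\
i_A(\alpha_s(a)) &= i_G(s)\,i_A(a)\,i_G(s)^*.
\end{align*}
The second is the standard covariance of the canonical pair $(i_A,i_G)$. The first is a direct calculation: using the defining formula \eqref{i_X} and the convolution formulas for the crossed-product correspondence, both sides, when applied to an arbitrary $f \in C_c(G,A)$, yield $t \mapsto \gamma_s(\xi)\cdot f(t)$, where one uses the compatibility $\gamma_s(\xi\cdot \alpha_{s^{-1}}(b)) = \gamma_s(\xi)\cdot b$ of the action with the bimodule structure.

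Next, I would apply the extended Toeplitz representation
$$(\bar{k_{X\rtimes_\gamma G}},\bar{k_{A\rtimes_\alpha G}})\colon (M(X\rtimes_\gamma G),M(A\rtimes_\alpha G)) \to M(\OO_{X\rtimes_\gamma G})$$
to these identities. Since this extended pair is a correspondence homomorphism (so left and right multiplier actions are preserved upon landing in $M(\OO_{X\rtimes_\gamma G})$), and $u_s = \bar{k_{A\rtimes_\alpha G}}(i_G(s))$ by definition, the first identity transforms into
$$\bar{k_{X\rtimes_\gamma G}}(i_X(\gamma_s(\xi))) = u_s\,\bar{k_{X\rtimes_\gamma G}}(i_X(\xi))\,u_s^*,$$
and the defining commutative diagram of $\OO_{i_X,i_A}$ from \corref{functor} identifies $\bar{k_{X\rtimes_\gamma G}}\circ i_X = \OO_{i_X,i_A}\circ k_X$. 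Thus
$$\OO_{i_X,i_A}(\beta_s(k_X(\xi))) = u_s\,\OO_{i_X,i_A}(k_X(\xi))\,u_s^*,$$
and applying $\bar{k_{A\rtimes_\alpha G}}$ to the second identity gives the analogous relation for $k_A(a)$. Since $\OO_X$ is generated as a $C^*$-algebra by $k_X(X)\cup k_A(A)$, the covariance relation extends to all of $\OO_X$ by multiplicativity, linearity, and norm-continuity.

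Finally, to see that $u$ is a strictly continuous unitary homomorphism: $i_G$ is by construction a strictly continuous homomorphism from $G$ into the unitary group of $M(A\rtimes_\alpha G)$, and $\bar{k_{A\rtimes_\alpha G}}$ is a nondegenerate homomorphism, so it sends unitary multipliers to unitary multipliers and is strictly continuous on norm-bounded sets. The only mildly delicate step in the whole argument is the correspondence-level identity $i_X(\gamma_s(\xi)) = i_G(s)\cdot i_X(\xi)\cdot i_G(s)^*$, but this reduces to a direct unwinding of the convolution formulas.
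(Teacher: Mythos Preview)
Your proposal is correct and follows essentially the same approach as the paper's proof: both reduce the covariance relation to the correspondence-level identity relating $i_G(s)$ and $i_X(\xi)$ (the paper writes it as $i_G(t)\cdot i_X(x)=i_X(\gamma_t(x))\cdot i_G(t)$, you write the equivalent conjugation form), push it through $(\bar{k_{X\rtimes_\gamma G}},\bar{k_{A\rtimes_\alpha G}})$, and invoke the defining diagram of $\OO_{i_X,i_A}$ on the generators $k_X(X)\cup k_A(A)$. Your version is slightly more explicit about the provenance of $\beta$ and the strict continuity of $u$, but the substance is the same.
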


\begin{proof}
We only need to verify the covariance condition, namely that for each $t\in G$ we have
\[
 \ad u(t) \circ \OO_{i_X,i_A} = \OO_{i_X,i_A}\circ \beta_t,
\]
and it is enough to check this on the generators from $X$ and $A$.
For $x\in X$ we have
\begin{align*}
u(t)\OO_{i_X,i_A}\circ k_X(x)
&=\bar{k_{A\rtimes_\alpha G}}\bigl(i_G(t)\bigr)\bar{k_{X\rtimes_\gamma G}}\bigl( i_X(x)\bigr)
\\&=\bar{k_{X\rtimes_\gamma G}}\bigl(i_G(t)\cdot i_X(x)\bigr)
\\&=\bar{k_{X\rtimes _\gamma G}}\Bigl(i_X\bigl(\gamma_t(x)\bigr)\cdot i_G(t)\Bigr)
\\&=\bar{k_{X\rtimes_\gamma G}}\Bigl(i_X\bigl(\gamma_t(x)\bigr)\Bigr)\bar{k_{A\rtimes_\alpha G}}\bigl(i_G(t)\bigr)
\\&=\OO_{i_X,i_A}\circ k_X\circ\gamma_t(x)u(t)
\\&=\OO_{i_X,i_A}\circ\beta_t\circ k_X(x)u(t),
\end{align*}
and the calculation for generators from $A$ is similar.
\end{proof}

\begin{prop}\label{surjection}
The integrated form 
\[
 \OO_{i_X,i_A} \times u\colon\OO_X \rtimes_\beta G \to \OO_{X \rtimes_\gamma G}
\]
of the covariant pair $(\OO_{i_X,i_A}, u)$
is surjective.
\end{prop}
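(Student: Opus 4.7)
The plan is to reduce surjectivity to showing that the range of $\OO_{i_X,i_A}\times u$ contains a generating set for $\OO_{X\rtimes_\gamma G}$. Since the Cuntz-Pimsner algebra $\OO_{X\rtimes_\gamma G}$ is generated as a $C^*$-algebra by $k_{X\rtimes_\gamma G}(X\rtimes_\gamma G)$ together with $k_{A\rtimes_\alpha G}(A\rtimes_\alpha G)$, it suffices to exhibit both of these subsets inside the closed range.

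The commutative diagram of \corref{functor} supplies the two identities
\[
\OO_{i_X,i_A}\circ k_X=\bar{k_{X\rtimes_\gamma G}}\circ i_X\midtext{and}\OO_{i_X,i_A}\circ k_A=\bar{k_{A\rtimes_\alpha G}}\circ i_A,
\]
and by definition $u=\bar{k_{A\rtimes_\alpha G}}\circ i_G$. Reading $u$ as its integrated form on $C_c(G)\subset C^*(G)$, for each $x\in X$, $a\in A$, and $g\in C_c(G)$ the products $\OO_{i_X,i_A}(k_X(x))\cdot u(g)$ and $\OO_{i_X,i_A}(k_A(a))\cdot u(g)$ sit in the range. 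Combining multiplicativity and strict continuity of the bar-extensions $\bar{k_{X\rtimes_\gamma G}}$ and $\bar{k_{A\rtimes_\alpha G}}$ with the fact that $i_X(x)\cdot i_G(g)\in X\rtimes_\gamma G$ and $i_A(a)\cdot i_G(g)\in A\rtimes_\alpha G$ identifies these products with $k_{X\rtimes_\gamma G}(i_X(x)\cdot i_G(g))$ and $k_{A\rtimes_\alpha G}(i_A(a)\cdot i_G(g))$, respectively.

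Finally, I would invoke the factorization $X\rtimes_\gamma G=\bar{i_X(X)\cdot i_G(C_c(G))}$ already used in the proof of \propref{prop:morphism}, together with the standard analogue $A\rtimes_\alpha G=\bar{i_A(A)\cdot i_G(C_c(G))}$ for the crossed-product $C^*$-algebra. Closing in norm places both $k_{X\rtimes_\gamma G}(X\rtimes_\gamma G)$ and $k_{A\rtimes_\alpha G}(A\rtimes_\alpha G)$ inside the closed range of $\OO_{i_X,i_A}\times u$, and surjectivity follows. The only mildly delicate step is the identification $\OO_{i_X,i_A}(k_X(x))\cdot u(g)=k_{X\rtimes_\gamma G}(i_X(x)\cdot i_G(g))$; beyond that, the argument is a direct chase through \corref{functor} and the universal property of $\OO_{X\rtimes_\gamma G}$, so I do not anticipate any substantial obstacle.
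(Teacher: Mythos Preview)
Your proposal is correct and follows essentially the same strategy as the paper: show that the range of $\OO_{i_X,i_A}\times u$ contains both $k_{A\rtimes_\alpha G}(A\rtimes_\alpha G)$ and $k_{X\rtimes_\gamma G}(X\rtimes_\gamma G)$, which generate $\OO_{X\rtimes_\gamma G}$. The only difference is in bookkeeping: the paper parameterizes preimages by arbitrary $f\in C_c(G,A)$ and $\xi\in C_c(G,X)$ (viewed as $k_A\circ f$ and $k_X\circ\xi$ in $C_c(G,\OO_X)$) and carries out an explicit integral computation to identify $\int_G i_X(\xi(t))\cdot i_G(t)\,dt$ with $\xi$, whereas you work with the elementary products $i_X(x)\cdot i_G(g)$ and $i_A(a)\cdot i_G(g)$ and then invoke the density $X\rtimes_\gamma G=\bar{i_X(X)\cdot i_G(C_c(G))}$ already established in \propref{prop:morphism}. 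Your route is a little more economical in that it recycles that factorization rather than recomputing it, but the underlying idea is identical.
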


\begin{proof}
Let $f \in C_c(G,A)$. Then $k_A\circ f\in C_c(G,\OO_X)$, and
since $(i_A,i_G)$ is a covariant pair we have
\begin{align*}
 (\OO_{i_X,i_A} \times u)(k_A \circ f) &= \int_G \OO_{i_X,i_A}(k_A(f(t))) u(t) \,dt \\
 &= \overline{k_{A \rtimes_\alpha G}} \left(\int_G  i_A (f(t)) i_G(t) \,dt \right) \\
 &= k_{A \rtimes_\alpha G} \bigl((i_A \times i_G) (f)\bigr) \\
 &= k_{A \rtimes_\alpha G} (f).
\end{align*}
Therefore the image of $\OO_{i_X,i_A} \times u$ contains $k_{A \rtimes_\alpha G} (A \rtimes_\alpha G)$. 

Now let $\xi \in C_c(G,X)$. A similar calculation shows that
\[
 (\OO_{i_X,i_A} \times u)(k_X \circ \xi) 
 = \overline{k_{X \rtimes_\gamma G}} \left(\int_G  i_X (\xi(t))\cdot i_G(t) \,dt \right).
\]
Now, for any $f \in C_c(G,A), s \in G$ we can calculate
\begin{align*}
 \left(\int_G  i_X (\xi(t)) \cdot i_G(t) \,dt \,f\right) (s) 
 &= \left(\int_G  i_X (\xi(t))\cdot i_G(t)f \,dt \right)(s) \\
 &= \int_G \bigl( i_X (\xi(t))\cdot i_G(t)f\bigr)(s) \,dt \\
 &= \int_G \xi(t) \cdot \bigl(i_G(t) f\bigr)(s) \,dt \\
 &= \int_G \xi(t) \cdot\alpha_t(f(t^{-1}s)) \,dt \\
 &= (\xi \cdot f)(s),
\end{align*}
and so we have
\[
 \int_G  i_A (\xi(t))\cdot i_G(t) \,dt  = \xi .
\]
Thus the image of $\OO_{i_X,i_A} \times u$ also contains $k_{X \rtimes_\gamma G}(X \rtimes_\gamma G)$, and it now follows that $\OO_{i_X,i_A}$ is surjective.
\end{proof}

\begin{rem}
Ideally we would like the map in \propref{surjection} to be injective, which would give
 $\OO_X \rtimes_\beta G \cong \OO_{X \rtimes_\gamma G}$
for an arbitrary locally compact group $G$. In a forthcoming paper
\cite{KQRCorrespondenceCoaction}
we 
will use our techniques to prove this when the group $G$ is amenable, thereby recovering the result previously obtained by Hao and Ng \cite[Theorem~2.10]{HN} using the theory of correspondence coactions.
\end{rem}


\providecommand{\bysame}{\leavevmode\hbox to3em{\hrulefill}\thinspace}
\providecommand{\MR}{\relax\ifhmode\unskip\space\fi MR }
\providecommand{\MRhref}[2]{%
  \href{http://www.ams.org/mathscinet-getitem?mr=#1}{#2}
}
\providecommand{\href}[2]{#2}

\end{document}